\documentclass[final]{siamltex}
 \pdfoutput=1
\usepackage{graphicx}
\usepackage{amsfonts}
\usepackage{mathrsfs}
\usepackage{epsfig}
\usepackage{amsmath}
\usepackage{amssymb}
\usepackage{url}
\usepackage{color}

\def\f{{\boldsymbol{f}}}
\def\g{{\boldsymbol{g}}}
\def\e{{\boldsymbol{e}}}
\def\u{{\boldsymbol{u}}}
\def\vv{{\boldsymbol{v}}}
\def\w{{\boldsymbol{w}}}
\def\xii{{\boldsymbol{\xi}}}
\def\muu{{\boldsymbol{\mu}}}

\def\E{{\boldsymbol{E}}}
\def\X{{\boldsymbol{X}}}
\def\H{{\boldsymbol{H}}}
\def\L{{\boldsymbol{L}}}

\newtheorem{remark}{\bf Remark}[section]
\usepackage{graphicx}
\usepackage{placeins}
\usepackage{cancel}
\usepackage{ulem}
\usepackage{marginnote}

\textwidth=14.5cm \textheight=21.5cm
\title{ Expandable Local and Parallel Two-Grid Finite Element Scheme for the Stokes Equations}
\author{Yanren Hou\thanks{School of Mathematics and Statistics, Xi'an Jiaotong University,
        Xi'an, Shaanxi 710049, China. ({\tt yrhou@mail.xjtu.edu.cn}). Partially supported by NSFC (Grant No. 11971378 \& 11571274).}
        \and Feng Shi\thanks{College of Science, Harbin Institute of Technology, Shenzhen 518055, China. ({\tt shi.feng@hit.edu.cn}).     Partially supported by  Foundation Research Project of Shenzhen (Grant No. JCYJ2018030617181319 \& ZDSYS201707280904031)}
        \and Haibiao Zheng \thanks{%
School of Mathematical Sciences, East China Normal University,
Shanghai Key Laboratory of Pure Mathematics and Mathematical
Practice, Shanghai, P.R. China. (\texttt{hbzheng13@gmail.com}).
Partially supported by   NSFC (Grant No. 11971174) and NSF
of Shanghai (Grant No. 19ZR1414300) and Science and Technology
Commission of Shanghai Municipality (Grant No. 18dz2271000 \& 19JC1420102).}
}
\begin{document}

\maketitle
\begin{abstract}
In this paper, we present a novel local and parallel two-grid finite element scheme for solving the Stokes equations,  
and rigorously establish its a priori error
estimates. The scheme admits simultaneously small scales of subproblems and distances between subdomains and its expansions, and hence can be expandable.
Based on the a priori error
estimates, we provide a corresponding iterative scheme with suitable iteration number. The resulting iterative  scheme
can reach the optimal convergence orders within specific two-grid
iterations ($O(|\ln H|^2)$ in 2-D and
$O(|\ln H|)$ in 3-D) if the coarse mesh size $H$ and the fine mesh size $h$
are properly chosen. Finally, some  numerical
tests including 2-D and 3-D cases are carried out to verify our theoretical results.
'
\end{abstract}

\begin{keywords}
two-grid method, domain decomposition method,
superposition principle, local and parallel, iterative scheme
\end{keywords}

\begin{AMS}
65N15, 65N30, 65N55
\end{AMS}

\section{Introduction}
Due to  the limiting of  computer resources, two-grid  finite element methods/nonlinear Galerkin schemes \cite{Xu1996,AMMI} and domain
decomposition methods \cite{CHAN} are popular and powerful tools for numerical simulations of linear and nonlinear PDEs nowadays. Such as  two-grid/two-level post-processing schemes for incompressible flow, we refer
\cite{ Layton1, HOULI1, HOULI2, LIHOU, LIUHOU,
XU} and the references therein for details.

In the past decades, a local and parallel two-grid finite element
method for elliptic boundary value problems was initially proposed
in \cite{XU1}. The scheme firstly solves the elliptic equation on a
coarse mesh  to get an initial lower frequency
guess of the solution. Then the whole computational domain
is divided into a series of disjoint subdomains { $\{D_j\}$} and the
driven term of the error equation, namely the residual term, is split into
several {  parts} defined only on {  such} small subdomains. Finally the global error equation can be
transferred into a series of subproblems with local driven terms.
Since the solution of  higher frequency  to each subproblem
decays very fast apart from the support of the local driven term, by suitablely
expanding each $D_j$ to the domain $\Omega_j\supset D_j$
and imposing the homogeneous boundary condition on $\partial\Omega_j$,
each subproblem can be approximated in a localized version defined
in the corresponding  { expanded} domain $\Omega_j$. The most attractive feature of the scheme is that
not any communication is required between local fine grid subproblems,
which makes the scheme a highly effective parallel scheme.
Such local and parallel two-grid scheme can be found  in \cite{Bank} and  has been extended to the
Stokes equations in \cite{HEXU1}. Error estimates derived in \cite{XU1,HEXU1} show that the approximate solutions in such schems can reach the optimal convergence orders in
both $H^1$ and $L^2$ norms.

However, according to \cite{NITSCHE},  the error
estimates are limited by the usage of the superapproximation
property of finite element spaces, which makes the error constant appeared in
\cite{XU1,HEXU1} to be a form of  $O(t^{-1})$,
where $t$ denotes the distance between $\partial D_j$ and
$\partial\Omega_j$. To obtain  the optimal error orders, usually $t=O(1)$ is required, which means the distance of { $\partial\Omega_j$}   { and $\partial D_j$} is almost a constant. Therefore one can not expect $\Omega_j$ to be arbitrary small. This  will prevent the corresponding scheme
from utilization in large parallel computer systems. In the previously mentioned local and parallel schemes,
 computational results  for  each $\Omega_j$ are usually removed
outside $D_j$, and are simply pasted  together to form the final
approximation, in which  discontinuity may appear along the boundaries of different
$D_j$. Instead, based on the method of partition of unity \cite{BABUSKA},  a local and parallel two-grid scheme is  proposed  for second order linear
elliptic equations \cite{WANG, ZHENG} and is also extended  to dealing with the Stokes equations and Navier-Stokes equations\cite{ZHENG2, ZHENG3}. Although the partition of unity method can guarantee that
the global approximation is continuous,  usage of  superposition principle  causes a crucial requirement that  the distance $t$ should be
   $O(1)$. To overcome such defects, some research on
linear elliptic equations has been done by the first author and his collaborator  in \cite{HOUDU} by
iterative method.

In this paper, based on the basic idea presented in \cite{XU1,HEXU1}, as an important extension of the idea in \cite{HOUDU},  we construct a local and parallel two-grid  iterative
scheme for solving the Stokes equations, in which the scale of each subproblem is
 $O(H)$, just requiring that $\mbox{diam}(D_j)=O(H)$ and $t=O(H)$ (much smaller than $O(1)$).
Since only a small overlapping in each two adjacent subproblems while $H$
tends to zero,   the scale of each subproblem can be arbitrary small as $O(H)$, that's a main reason why we
call the scheme an expandable local and parallel two-grid scheme. Meanwhile, in each cycle of two-grid iteration, to guarantee a better $L^2$ error estimate,  we  { adopt} a coarse grid correction.
Another main contribution
in this paper is that, to yield the globally
continuous { velocity}   in $\Omega$,  we use the principle of  superposition based on  { a} partition of unity to generate a series of local
and independent subproblems.  In particular, for patches of given
size,  to obtain a similar approximate accuracy as the one from the standard Galerkin
method on the fine mesh, we carry out rigorous analysis, and through the a priori
error estimate of the scheme, we  show that  a few number of iterations of order $O(|\ln H|^2)$ in 2-D and
$O(|\ln H|)$ in 3-D is only needed, respectively.  Similar technique has
been successfully applied for adaptive schemes with some a posterior error estimates in \cite{LARSON, LARSON1}.

The remainder of this paper is organized as follows. In Section 2, we will introduce the model problem and some preliminary materials.
 In Section 3, we will present our expandable local and parallel scheme for the Stokes equations. The   a priori
error estimates of the scheme are derived and then suggested iterative scheme is presented  in Section 4.
Some numerical experiments including 2-D and 3-D  examples are carried out in a parallel computer system  to support our
theoretical analysis in Section 5. Finally we give some conclusions in Section \sout{5} { 6}.

\section{Preliminaries}

Consider the following Stokes equations in a { smooth} bounded convex domain
$\Omega\subset R^d$, $d=2,3$.
\begin{equation}\label{Stokes}
\left\{\begin{array}{rl}
-\nu\Delta\mathbf{u}+\nabla p=\mathbf{f}, & \mbox{ in }\Omega,\\
\nabla\cdot\mathbf{u}=0, & \mbox{ in }\Omega,\\
\mathbf{u}=0, & \mbox{ on }\partial\Omega.
\end{array}\right.
\end{equation}
For a given bounded domain $D\subset R^d$,  standard notations for Sobolev spaces $W^{s,p}(D)$ and their
associated norms will be used, see, e.g., \cite{ADAMS,CIARLET}. In particular, for $p=2$, we simply define $H^s(D)=W^{s,2}(D)$,
$H_0^1(D)=\{v\in H^1(D): v|_{\partial D}=0\}$, and their norms as $\|\cdot\|_{s,D}=\|\cdot\|_{s,2,D}$ and $|\cdot|_{s,D}$.
We denote by $(\cdot,\cdot)_D$ the $L^2-$inner product on $D$. Therefore,  we have $\|\cdot\|_{0,D}=(\cdot,\cdot)_D^\frac12$, and  we use $\|\cdot\|_{D}$ to denote $\|\cdot\|_{0,D}$ in the rest of the paper.

Hereafter, we always use boldface characters to denote vector valued functions or spaces, for instance, $\H^s(D)=(H^s(D))^d$.
For simplicity, we will also use the symbols $\lesssim$, $\gtrsim$ and $\approxeq$ in the following sense: $x_1\lesssim y_1$, $x_2\gtrsim y_2$ and $x_3\approxeq y_3$ are equivalent to  $x_1\leq C_1y_1$, $x_2\geq c_2y_2$ and $c_3x_3\leq y_3\leq C_3x_3$ for some positive constants $C_1$, $c_2$, $c_3$ and $C_3$ independent of mesh
size.  We know that $\|\cdot\|_{1,D}\approxeq \|\nabla\cdot\|_{0,D}$ in $H^1_0(D)$. For subdomains $D_1\subset D_2\subset\Omega$, we use $D_1\subset\subset D_2$  to express that $\mbox{dist}(\partial D_1,\partial D_2)\stackrel{\triangle}{=}\mbox{dist}(\partial D_2\backslash\partial\Omega, \partial D_1\backslash\partial\Omega)>0$.

Furthermore, let us denote
\begin{eqnarray*}
&&\X(D)=\H^1(D),\quad\X_0(D)=\H_0^1(D),\quad M(D)=L^2(D)\\
&&Q(D)=L_0^2(D)=\{q\in L^2(D):\;\int_Dq=0\},
\end{eqnarray*}
and \sout{we} introduce the following bilinear forms
\begin{eqnarray*}
&&a_D(\u,\vv)=\nu(\nabla\u,\nabla\vv)_{D},\;\; d_D(p,\vv)=-(p,\nabla\cdot\vv)_{D}\;\;\forall \u,\vv \in\X(D),\;p\in M(D).
\end{eqnarray*}
By these notations, when $D=\Omega$, we simply denote by $\X=\X(\Omega)$, $\X_0=\X_0(\Omega)$, $M=M(\Omega)$, $Q=Q(\Omega)$, and $a(\cdot,\cdot)=a_\Omega(\cdot,\cdot)$, $d(\cdot,\cdot)=d_\Omega(\cdot,\cdot)$ and $(\cdot,\cdot)=(\cdot,\cdot)_\Omega$.
Then the weak formulation of (\ref{Stokes}) is: find $[\u,p]\in\X_0\times Q$ such that
$$
\left\{\begin{array}{ll}
a(\u,\vv)+d(p,\vv)=(\f,\vv)\quad &\forall\vv\in\X_0,\\
d(q,\u)=0 &\forall q\in Q.
\end{array}
\right.
$$
If we further introduce the bilinear form
$$
B_D([\u,p],[\vv,q])=a_D(\u,\vv)+d_D(p,\vv)-d_D(q,\u),
$$
and $B(\cdot,\cdot)=B_\Omega(\cdot,\cdot)$,
then the above weak form for the Stokes equations can be rewritten as
\begin{equation}\label{WStokes}
B([\u,p],[\vv,q])=(\f,\vv)\quad\forall [\vv,q]\in\X_0\times Q.
\end{equation}

Assume that $T^H(\Omega)=\{\tau_\Omega^H\}$ is a regular triangulation of $\Omega$, with the mesh size defined as
$H=\max_{\,\tau_\Omega^H\in T^H(\Omega)}\{\mbox{diam}(\tau_\Omega^H)\}$.
For certain given positive integer $r\geq 1$, let $\X^H\subset\X$ and $M^H\subset M$ be $C^0-$finite element spaces defined on $\Omega$ with approximation order $r+1$ in $\H^{r+1}(\Omega)$ and approximation order $r$ in $H^r(\Omega)$, respectively. We denote $\X_0^H=\X^H\cap\H^1_0(\Omega)$, $Q^H=M^H\cap L_0^2(\Omega)$ and assume $\X_0^H\times Q^H$ is a stable finite element pair.
Given $D\subset\Omega$, which aligns with $T^H(\Omega)$, we denote $T^H(D)$ as the restriction of $T^H(\Omega)$ on $D$, $\X^H(D)$ and $M^H(D)$ as the restriction of $\X^H$ and $M^H$ on $D$ and $Q^H(D)=M^H(D)\cap L_0^2(D)$.

Based on the finite element spaces defined above and weak formulation (\ref{WStokes}), we make the following assumptions.
\begin{itemize}
{ \item[{\bf A1.}] \emph{Interpolant}. Let $I_H$ be a Lagrange finite element interpolation of  $\X(D)$ onto $\X^H(D)$ and $\hat I_H=I-I_H$. There holds for any $\w\in\H^{s}(\tau_\Omega^H)$, $0\leq m\leq s$ and $s>d/2$,}
$$
\|\hat I_H\w\|_{m,\tau_\Omega^H}\lesssim H^{s-m}|\w|_{s,\tau_\Omega^H}.
$$
\item[{\bf A2}.] \emph{Inverse Inequality}. For any $\w\in\X^H(D)$, $0\leq m\leq s$, $1\leq p,q\leq\infty$,
$$\|\w\|_{s,p,\tau_\Omega^H}\lesssim H^{m-s+\frac{d}{p}-\frac{d}{q}}\|\w\|_{m,q,\tau_\Omega^H}.$$
\item[{\bf A3}.] \emph{Regularity}. Assume $D$ is smooth such that for any $\f\in\L^2(D)$, any solution to
$$
B_D([\u,p],[\vv,q])=(\f,\vv)_D\quad\forall [\vv,q]\in \X_0(D)\times Q(D),
$$
satisfies
$$
\|\u\|_{2,D}+\|p\|_{1,D}\lesssim \|\f\|_D.
$$
\end{itemize}

Now we can state the standard Galerkin approximation of (\ref{WStokes}) as follows: find $[\u_H,p_H]\in \X_0^H\times Q^H$ such that
\begin{equation}\label{SGM}
B([\u_H,p_H],[\vv_H,q_H])=(\f,\vv_H) \quad \forall\, [\vv_H,q_H]\in\X_0^H\times Q^H.
\end{equation}
For this problem, a  classical result holds, namely assuming that $[\u,p]\in (\H^{r+1}(\Omega)\times H^r(\Omega))\cap(\X_0\times Q)$, one have
\begin{equation}\label{ASGM}
\|\u-\u_H\|_{\Omega}+H(\|\nabla(\u-\u_H)\|_{\Omega}+\|p-p_H\|_\Omega)=O(H^{r+1}).
\end{equation}

\section{Local and Parallel Two-Grid Scheme}

In this section, we will introduce our local and parallel two-grid scheme for solving Stokes equations. Firstly, we define  the error functions by
$$
\hat\u=\u-\u_H\in\X_0,\quad \hat p=p-p_H\in Q,
$$
where $[\u,p]$ and $[\u_H, p_H]$ are solutions to (\ref{WStokes}) and (\ref{SGM}), respectively.
Then $[\hat\u,\hat p]$ satisfies the  error equation
\begin{equation}\label{errequ}
B([\hat\u,\hat
p],[\vv,q])=(\f,\vv)-B([\u_H,p_H],[\vv,q])\quad\forall[\vv,q]\in\X_0\times
Q.
\end{equation}

For any given
partition of unity of $\Omega$, namely $\{\phi_j\}_{j=1}^N$ with  $N\geq 1$ an integer,
$\Omega\subset\bigcup\limits_{j=1}^N\mbox{supp }\phi_j$ and
$\sum\limits_{j=1}^N\phi_j\equiv 1$ on $\Omega$, we denote $D_j=\mbox{supp}\,\phi_j$ and always assume
that $D_j$ aligns with $T^H(\Omega)$. With this patition of unity,
the error equation (\ref{errequ}) can be rewritten as:
\begin{equation*}
B([\hat\u,\hat
p],[\vv,q])=(\f,\sum\limits_{j=1}^N\phi_j\vv)-B([\u_H,p_H],\sum\limits_{j=1}^N\phi_j[\vv,q])\quad\forall[\vv,q]\in\X_0\times
Q.
\end{equation*}
Clearly, by the principle of superposition, this equation is equivalent
to the summation of the following $N$-subproblems:
\begin{equation}\label{errequ2}
B([\hat\u^j,\hat
p^j],[\vv,q])=(\f,\phi_j\vv)-B([\u_H,p_H],\phi_j[\vv,q])\quad\forall[\vv,q]\in
\X_0\times Q.
\end{equation}
Namely, $[\hat\u,\hat p]=\sum\limits_{j=1}^N[\hat\u^j,\hat p^j]$. The most important feature for such subproblems is that
all the
subproblems are mutually independent when $[\u_H,p_H]$ is known. Meanwhile
each subproblem is globally defined with homogeneous
Dirichlet boundary condition for the velocity, however is driven by
right-hand-side term with very small compact support, namely $D_j$.
As is
pointed out in \cite{LARSON1} (used also in \cite{HOUDU}) for elliptic problems, solution $\hat\u^j$ to each subproblem may decay very fast away from $D_j$.
So for the present Stokes problems, we will localize each subproblem to a small extension domain of $D_j$, namely $\Omega_j$, { and} impose { homogeneous Dirichlet} boundary condition for the velocity and discretize the subproblem on some triangulation $T^h(\Omega_j)$ for finite element approximation.

\begin{remark}\label{r_puf_dom}
For each $\Omega_j$, here we give a much natural choice.
The
partition of unity functions $\{\phi_j\}_{j=1}^N$ of $\Omega$ is selected as the piecewise linear Lagrange basis functions
associated with the coarse triangulation $T^H(\Omega)$, where
$N$ is the number of all vertices in $T^H(\Omega)$ including the
boundary ones. For each vertex $x_j$ of $T^H(\Omega)$, we can
denote $D_j=\mbox{supp }\phi_j$, and construct $\Omega_j$ by one layer extension of  $D_j$ in $T^H(\Omega)$, namely
$
\Omega_j=\bigcup_{x_i\in D_j} D_i.
$
We refer \cite{ZHENG2} for more details. Clearly, we know the scale esimates
\begin{equation}\label{3281411}
\mbox{diam}(D_j),\;\mbox{dist}(\partial
D_j,\partial\Omega_j)\approxeq H.
\end{equation}
\end{remark}

Generally, we need assume that each $D_j$ and its extension domain $\Omega_j$
are aligned with $T^H(\Omega)$. For each local domain $\Omega_j$, its regular triangulation is denoted by $T^h(\Omega_j)=\{\tau^h_{\Omega_j}\}$,
with mesh scale over all $j$ as $h=\max_{1\leq j\leq
N}\left\{\max_{\,\tau^h_{\Omega_j}\in
T^h(\Omega_j)}\{\mbox{diam}(\tau^h_{\Omega_j})\}\right\}$. As to the natural choices of partition of unity functions and subdomains as discussed in Remark \ref{r_puf_dom}, we can simply select local fine mesh $T^h(\Omega_j)$ as the limitation of the global regular triangulation of $\Omega$, $T^h(\Omega)=\{\tau^h_\Omega\}$, namely,  $T^h(\Omega_j)=T^h(\Omega)|_{\Omega_j}$, which will be used in the rest of this paper.

Base on the fine mesh triangulations $T^h(\Omega_j)$ and $T^h(\Omega)$ for each $\Omega_j$ and $\Omega$, we can introduce corresponding  finite element spaces $\X^{h}(\Omega_j)$, $\X_0^{h}(\Omega_j)$
and $\X^h$, $\X_0^h$, similarly defined as $\X^H$ and
$\X_0^H$ previously. We also introduce
$M^h(\Omega_j)$, $Q^h(\Omega_j)$, $M^h$ and $Q^h$ for the pressure accordingly. Noting that
any function in $\X_0^h(\Omega_j)$ can be extended to a function in
$\X_0^h$ through zero value  assignment over $\Omega\backslash\Omega_j$, we regard
$\X_0^h(\Omega_j)$ as a subspace of $\X_0^h$ in the sense of such
zero extension. In addition, we always assume that
\begin{equation}\label{hierarchical}\left\{\begin{array}{l}
\X^H\subset\X^h,\quad\X_0^H\subset\X_0^h=\bigcup\limits_{1\leq j\leq N}\X_0^h(\Omega_j),\\
M^H\subset M^h,\quad M^H|_{\Omega_j}\subset M^h(\Omega_j).
\end{array}\right.
\end{equation}
Besides, we define discontinuous finite
element spaces associated with the pressure
$$
M_j^h=\{q_h:\;q_h|_{\Omega_j}\in
M^h(\Omega_j),\;q_h|_{\Omega\backslash\Omega_j}\in
M^h(\Omega\backslash\Omega_j)\},\quad Q^h_j=M_j^h\cap L_0^2(\Omega).
$$
Clearly, $M^h\subset\bigcup\limits_{j=1}^NM_j^h\subset
M_H^h=\{q_h\in L^2(\Omega):q_h|_{\tau_H}\in M^h(\tau_H)\}$. Then for  $Q_H^h=M_H^h\cap L_0^2(\Omega)$, we have that $Q^h\subset
Q_H^h$ and the finite element  pair $\X_0^h\times Q_H^h$ is stable  (see
\cite{GIRRAV}).

Right now, we can present the localized finite element approximation to error equation (\ref{errequ2}) as follows:
find $[\hat\u^j_{H,h},\hat p^j_{H,h}]\in\X_0^{h}(\Omega_j)\times
Q^h(\Omega_j)$ such that $\forall [\vv_h,q_h]\in
\X_0^h(\Omega_j)\times Q^h(\Omega_j)$
\begin{equation}\label{alerrequ}
B_{\Omega_j}([\hat\u^j_{H,h},\hat
p^j_{H,h}],[\vv_h,q_h])=(\f,\phi_j\vv_h)_{\Omega_j}-B_{\Omega_j}([\u_H,p_H],\phi_j[\vv_h,q_h]),
\end{equation}
 in which $\hat\u^j_{H,h}$ and $\hat p^j_{H,h}$ are defined on $\Omega_j$. { In the rest, we always use the same symbols, that is $[\hat\u^j_{H,h},\hat p^j_{H,h}]$,  to denote their zero extension over $\Omega\backslash\Omega_j$. In such sense, $[\hat\u^j_{H,h},\hat p^j_{H,h}]\in \X_0^h\times Q_j^h(\Omega)$.}
Then we construct totally $$\hat\u_{H,h}=\sum\limits_{j=1}^N \hat\u^j_{H,h}\in\X_0^h,\quad
\hat p_{H,h}=\sum\limits_{j=1}^N \hat p_{H,h}^j\in Q_H^h,
$$
and the intermediate approximate solution
\begin{equation}\label{post1}
[\u_{H,h},p_{H,h}]=[\u_H+\hat\u_{H,h},p_H+\hat p_{H,h}].
\end{equation}
Since this approximation $[\u_{H,h},p_{H,h}]$ is derived by solving a
series of subproblems on { local} fine mesh   { with homogeneous boundary conditions}, its high frequency { error}
may be suppressed apparently. To balance the lower and higher frequency { errors},
a smooth step associated with the intermediate
approximation $[\u_{H,h},p_{H,h}]$ is resorted through coarse grid
correction as follows: find $[\E_\u^H, E_p^H]\in\X_0^H\times Q^H$ such that
$\forall[\vv_H,q_H]\in\X_0^H\times Q^H$
\begin{equation}\label{post2}
B([\E_u^H,E_p^H],[\vv_H,q_H])=(\f,\vv_H)-B([\u_{H,h},p_{H,h}],[\vv_H,q_H]).
\end{equation}
To this end, { an expected more accurate approximation than the coarse grid approximation is obtained,}
\begin{equation}\label{final}
[\u_H^h,p_H^h]=[\u_{H,h}+\E_\u^H,p_{H,h}+E_p^H]=[\u_H+\hat\u_{H,h}+\E_\u^H,p_H+\hat
p_{H,h}+E_p^H].
\end{equation}

In summary, we propose our local and parallel
two-grid scheme for solving (\ref{Stokes}) in the following.

\noindent{\bf{Local and parallel two-grid scheme:}}
\begin{description}
\item[Step 0.]  Deriving $[\u_H,p_H]$ by solving (\ref{SGM});
\item[Step 1.] Solving the equation (\ref{alerrequ})  to get $\{[\hat\u_{H,h}^j,\hat p_{H,h}^j]\}_{j=1}^N$ for each $j$, then constructing $[\u_{H,h},p_{H,h}]$ by formula (\ref{post1});
\item[Step 2.] Deriving $[\E_\u^h,E_p^h]$ by
solving (\ref{post2})  and finally constructing
$
[\u_H^h,p_H^h]
$
by (\ref{final}).
\end{description}

\section{Theoretical Analysis and  Suggested Local and Parallel Two-Grid Iterative Scheme}

As the basic step for analyzing the scheme above, we follow the idea discussed in \cite{HOUDU}, and extend the local sub-problem
(\ref{alerrequ}) to the original domain $\Omega$. For this purpose, we denote
$\Gamma=\partial\Omega$,
$\Gamma_j=\partial\Omega_j\backslash\Gamma$, and introduce a trace
space $\H^\frac12(\Gamma_j)=\H_0^1(\Omega)|_{\Gamma_j}$ on
$\Gamma_j$, which can be defined by interpolation (for example, see
\cite{JLLIONS})
$$
\H^\frac12(\Gamma_j)=\left\{
\begin{array}{ll}
[\L^2(\Gamma_j),\H^1(\Gamma_j)]_\frac12, \quad & \mbox{when}\,\Gamma_j\,\mbox{is a closed curve or surface},\\[0pt]
[\L^2(\Gamma_j),\H_0^1(\Gamma_j)]_\frac12, \quad &
\mbox{when}\,\Gamma_j\,\mbox{is a non-closed curve or surface}.
\end{array}
\right.
$$
We also denote by
$\H_h^\frac12(\Gamma_j)=\X_0^h(\Omega)|_{\Gamma_j}\subset\H^\frac12(\Gamma_j)$,
a finite dimensional trace space, equipped with the same
norm as in $\H^\frac12(\Gamma_j)$, and its dual space by
$\H_h^{-\frac12}(\Gamma_j)=(\H_h^\frac12(\Gamma_j))'$, which is
equipped with the norm
$$
\|\muu\|_{\H_h^{-\frac12}(\Gamma_j)}=\sup\limits_{\vv_h\in
\H_h^\frac12(\Gamma_j)}\frac{\int_{\Gamma_j}\vv_h\muu}{\|\vv_h\|_{\H^\frac12(\Gamma_j)}}.
$$

Then  based on the fictitious domain
method (see e.g., \cite{GLOWINSKI}), we can construct a problem with multiplier as
finding
$([\hat\u_{H,h}^j,\hat p_{H,h}^j],\xii^j)\in\X_0^h\times
Q_j^h\times\H_h^{-\frac12}(\Gamma_j)$, such that
\begin{eqnarray}\label{fictitious}
&&B([\hat\u_{H,h}^j,\hat p_{H,h}^j],[\vv_h,q_h])+<\xii^j,\vv_h>_{\Gamma_j}+<\muu,\hat\u_{H,h}^j>_{\Gamma_j}\\
&&\nonumber\qquad=(\f,\phi_j\vv_h)-B([\u_H,p_H],[\phi_j\vv_h,\phi_jq_h]),
\end{eqnarray}
holds for every $
([\vv_h,q_h],\muu)\in\X_0^h\times
Q_j^h\times\H_h^{-\frac12}(\Gamma_j)$, { where}
$$
<\muu,\vv_h>_{\Gamma_j}=\int_{\Gamma_j}\muu\vv_hds,\quad\forall
\muu\in\H_h^{-\frac12}(\Gamma_j),\;\vv_h\in \X_0^h.
$$
In the following, we will show the equivalence between problems (\ref{alerrequ}) and (\ref{fictitious}) in two steps.

Firstly, by introducing a subspace of $\X_0^h$
$$
\X_{j0}^h=\{\vv_h\in\X_0^h:\;<\muu,\vv_h>_{\Gamma_j}=0\;\;\forall
\muu\in\H_h^{-\frac12}(\Gamma_j)\}=\mbox{ker}(\gamma).
$$
 the above system (\ref{fictitious})
reduces to  finding $[\hat\u^j_{H,h},\hat p^j_{H,h}]\in\X_{j0}^h\times
Q_j^h$ such that
$$
B([\hat\u^j_{H,h},\hat
p^j_{H,h}],[\vv_h,q_h])=(\f,\phi_j\vv_h)-B([\u_H,p_H],[\phi_j\vv_h,\phi_jq_h]), \forall[\vv_h,q_h]\in\X_{j0}^h\times Q_j^h,
$$
which is obviously well-posed, since it  actually consists of two
independent Stokes problems defined in $\Omega_j$ and
$\Omega\backslash\Omega_j$ respectively, with homogeneous  boundary
conditions for { the} velocity.

Secondly,  for any given $\g\in\H_h^\frac12(\Gamma_j)$, we introduce two auxiliary
elliptic problems
$$
a_{\Omega_j}(\u_h^1,\vv_h)=0,\quad\u_h^1|_{\Gamma_j}=\g, \quad\u_h^1|_{\partial\Omega_j/\Gamma_j}=0, \quad\forall\vv_h\in\X_0^h(\Omega_j),
$$
and
$$
a_{\Omega\backslash\Omega_j}(\u_h^2,\vv_h)=0,\quad\u_h^2|_{\Gamma_j}=\g,\quad\u_h^2|_{\partial(\Omega/\Omega_j)/\Gamma_j}=0,\quad\forall\vv_h\in
\X_0^h(\Omega\backslash\Omega_j). $$
Clearly, these two problems establish two mappings $\gamma_1^{-1}$ and
$\gamma_2^{-1}$ from $\H_h^\frac12(\Gamma_j)$ into
$\X_E^h(\Omega_j)$ and $\X_E^h(\Omega\backslash\Omega_j)$,
respectively, with
\begin{eqnarray*}
    &&\X_E^h(\Omega_j)=\{\vv_h\in\X^h(\Omega_j): \vv_h|_{\partial\Omega_j\cap\partial\Omega}=0\},\\
    &&\X_E^h(\Omega\backslash\Omega_j)=\{\vv_h\in\X^h(\Omega\backslash\Omega_j): \vv_h|_{\partial\Omega\backslash\partial\Omega_j}=0\}.
\end{eqnarray*}
We can simply write as
$$
\u_h^1=\gamma_1^{-1}\g,\quad\u_h^2=\gamma_2^{-1}\g,
$$
which have the following estimates
$$
\|\gamma_1^{-1}\g\|_{\H^1(\Omega_j)},\;\|\gamma_2^{-1}\g\|_{H^1(\Omega\backslash\Omega_j)}\lesssim
\|\g\|_{\H^\frac12(\Gamma_j)}.
$$
We can also define a lifting operator $\gamma^{-1}$ from
$\H_h^\frac12(\Gamma_j)$ into $\X_0^h$: for any given
$\g\in\H_h^\frac12(\Gamma_j)$
$$
\gamma^{-1}\g=\left\{\begin{array}{ll}
\gamma_1^{-1}\g,\quad &\mbox{in}\;\Omega_j,\\
\gamma_2^{-1}\g,\quad &\mbox{in}\;\Omega\backslash\Omega_j,
\end{array}\right.
$$
which is the right inverse operator of the trace operator $\gamma$ from
$\X_0^h$ onto $\H_h^\frac12(\Gamma_j)$, with the property of
$$
\|\gamma^{-1}\g\|_{\H^1(\Omega)}\lesssim
\|\g\|_{\H^\frac12(\Gamma_j)}\quad\forall\g\in
\H_h^\frac12(\Gamma_j).
$$
Then we have for any $\muu\in\H_h^{-\frac12}(\Gamma_j)$
\begin{eqnarray}
    &&\|\muu\|_{\H_h^{-\frac12}(\Gamma_j)}=\sup\limits_{\g\in \H_h^\frac12(\Gamma_j)}\frac{<\muu,\g>_{\Gamma_j}}{\|\g\|_{\H^\frac12(\Gamma_j)}}\nonumber\\
    &&\qquad\lesssim
    \sup\limits_{\g\in \H_h^\frac12(\Gamma_j)}\frac{<\muu,\gamma^{-1}\g>_{\Gamma_j}}{\|\gamma^{-1}\g\|_{\H^1(\Omega)}}
    \leq\sup\limits_{\vv_h\in\X_0^h(\Omega)}\frac{<\muu,\vv_h>_{\Gamma_j}}{\|\vv_h\|_{\H^1(\Omega)}}, \label{inf-sup2}
\end{eqnarray}
which verifies that the bilinear form $<\cdot,\cdot>_{\Gamma_j}$ on
$\H_h^{-\frac12}(\Gamma_j)\times\X_0^h$ satisfies the inf-sup
condition.
Therefore, for any solution $[\hat\u^j_{H,h},\hat p^j_{H,h}]\in\X_0^h\times
Q_j^h$ to (\ref{alerrequ}), there exists a unique $\xii^j\in\H_h^{-\frac12}(\Gamma_j)$
such that $([\hat\u^j_{H,h},\hat p^j_{H,h}],\xii^j)$ satisfies the
 problem (\ref{fictitious}). { In such sense, the problems (\ref{alerrequ}) and (\ref{fictitious}) are equivalent.}

Now let us turn back to global and local problems
 (\ref{errequ}) and (\ref{errequ2}) related with the {  residuals of approximate solution}. 
 For the fine
triangulation $T^h(\Omega)$ and the associated finite element space $\X_0^h$,
their corresponding Galerkin approximations are finding
$[\hat\u_H,\hat p_H]\in\X_0^h\times Q^h$ and $[\hat\u_H^j,\hat
p_H^j]\in\X_0^h\times Q^h$, $j=1,2,\cdots,N$, such that
$\forall[\vv_h,q_h]\in\X_0^h\times Q^h$
\begin{equation}\label{errequh}
B([\hat\u_H,\hat
p_H],[\vv_h,q_h])=(\f,\vv_h)-B([\u_H,p_H],[\vv_h,q_h]),
\end{equation}
and
\begin{equation}\label{errequh2}
B([\hat\u_H^j,\hat
p_H^j],[\vv_h,q_h])=(\f,\phi_j\vv_h)-B([\u_H,p_H],[\phi_j\vv_h,\phi_jq_h]).
\end{equation}
We know that $[\hat\u_H,\hat
p_H]=[\sum\limits_{j=1}^N\hat\u_H^j,\sum\limits_{j=1}^N\hat p_H^j]$
is the Galerkin approximation of $[\hat\u,\hat p]$ in $\X_0^h\times
Q^h$, and meanwhile $[\u_h,p_h]=[\u_H,p_H]+[\hat\u_H,\hat p_H]\in\X_0^h\times
Q^h$ is the finite element approximation of
$[\u,p]$ on fine mesh.


Then if  denoting by
\begin{eqnarray*}
&&\e^j_{\u,h}=\hat\u^j_H-\hat\u^j_{H,h},\quad \e_{\u,h}=\sum\limits_{j=1}^N \e^j_{\u,h}=\u_h-\u_{H,h},\\
&&e^j_{p,h}=\hat p^j_H-\hat p^j_{H,h},\quad
e_{p,h}=\sum\limits_{j=1}^N e^j_{p,h}=p_h-p_{H,h},
\end{eqnarray*}
the local and the global error of $[\u_{H,h},p_{H,h}]$,
respectively,   { subtracting}
(\ref{errequ2}) from (\ref{fictitious}) gives
\begin{equation}\label{errorequj}
a(\e^j_{\u,h},\vv_h)+d(e_{p,h}^j,\vv_h)+<\xii^j,\vv_h>_{\Gamma_j}=0,\quad\forall
\vv_h\in\X_0^h,
\end{equation}
and further summing over all $j$'s yields
\begin{equation}\label{errorequ}
a(\e_{\u,h},\vv_h)+d(e_{p,h},\vv_h)+\sum\limits_{j=1}^N<\xii^j,\vv_h>_{\Gamma_j}=0,\quad\forall\vv_h\in
\X_0^h.
\end{equation}

In the following, we will carry out estimates on the local quantities $\e^j_{\u,h},e_{p,h}^j$ and $\xii^j$, which are very important in the error analysis of our scheme.
Firstly, for  $\xii^j$,
 by the previously defined operator $\gamma^{-1}$ and the fact that
 for every point $x\in \Omega$, there exists a
positive integer $\kappa$,
which is independent of  $N$ and $x$,
such that each $x$ belongs to at
most $\kappa$  different $\Omega_j$, we can easily derive the estimates as follows.
\begin{lemma}\label{LE0}{\it The multiplier $\xii^j$ in (\ref{fictitious}), also in (\ref{errorequj}), satisfies
$$
\|\xii^j\|_{\H_h^{-\frac12}(\Gamma_j)}\lesssim\|\nabla\e_{\u,h}^j\|_\Omega+\|e_{p,h}^j\|_\Omega,
$$
and
$$
\sum\limits_{j=1}^N<\xii^j,\vv_h>_{\Gamma_j}\lesssim
\kappa^\frac12\left(\sum\limits_{j=1}^N\|\xii^j\|^2_{\H_h^{-\frac12}(\Gamma_j)}\right)^\frac12\|\nabla\vv_h\|_\Omega\quad\forall\vv_h\in\X_0^h.
$$}
\end{lemma}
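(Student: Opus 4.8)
The plan is to estimate the two claimed bounds separately, both relying on the inf-sup machinery for the trace pairing $\langle\cdot,\cdot\rangle_{\Gamma_j}$ established in (\ref{inf-sup2}) and on the finite-overlap property encoded by the integer $\kappa$. For the first inequality, I would start from equation (\ref{errorequj}), which reads $a(\e^j_{\u,h},\vv_h)+d(e^j_{p,h},\vv_h)+\langle\xii^j,\vv_h\rangle_{\Gamma_j}=0$ for all $\vv_h\in\X_0^h$. Solving this for the pairing gives $\langle\xii^j,\vv_h\rangle_{\Gamma_j}=-a(\e^j_{\u,h},\vv_h)-d(e^j_{p,h},\vv_h)$, so I would bound the right-hand side by Cauchy--Schwarz: $|a(\e^j_{\u,h},\vv_h)|\lesssim\|\nabla\e^j_{\u,h}\|_\Omega\|\nabla\vv_h\|_\Omega$ and $|d(e^j_{p,h},\vv_h)|\lesssim\|e^j_{p,h}\|_\Omega\|\nabla\vv_h\|_\Omega$. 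Since $\|\vv_h\|_{\H^1(\Omega)}\approxeq\|\nabla\vv_h\|_\Omega$ on $\X_0^h$, dividing by $\|\vv_h\|_{\H^1(\Omega)}$ and taking the supremum over $\vv_h\in\X_0^h$ produces exactly the quantity appearing on the right of (\ref{inf-sup2}); the inf-sup bound then controls $\|\xii^j\|_{\H_h^{-1/2}(\Gamma_j)}$ by $\|\nabla\e^j_{\u,h}\|_\Omega+\|e^j_{p,h}\|_\Omega$, which is the first claim.

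For the second inequality, I would rewrite $\sum_{j=1}^N\langle\xii^j,\vv_h\rangle_{\Gamma_j}$ by first applying the duality bound $|\langle\xii^j,\vv_h\rangle_{\Gamma_j}|\le\|\xii^j\|_{\H_h^{-1/2}(\Gamma_j)}\|\vv_h\|_{\H^{1/2}(\Gamma_j)}$ for each $j$. The trace restriction satisfies $\|\vv_h\|_{\H^{1/2}(\Gamma_j)}=\|\vv_h|_{\Gamma_j}\|\lesssim\|\vv_h\|_{\H^1(\Omega_j)}$ (the trace is taken of the global function restricted to the patch $\Omega_j$). Summing over $j$ and applying the discrete Cauchy--Schwarz inequality in the index $j$ gives
\begin{equation*}
\sum_{j=1}^N\langle\xii^j,\vv_h\rangle_{\Gamma_j}\lesssim\Bigl(\sum_{j=1}^N\|\xii^j\|^2_{\H_h^{-1/2}(\Gamma_j)}\Bigr)^{1/2}\Bigl(\sum_{j=1}^N\|\vv_h\|^2_{\H^1(\Omega_j)}\Bigr)^{1/2}.
\end{equation*}
The finite-overlap property then enters decisively: since every point of $\Omega$ lies in at most $\kappa$ of the patches $\Omega_j$, summing the local $\H^1(\Omega_j)$ norms over-counts each contribution at most $\kappa$ times, so $\sum_{j=1}^N\|\vv_h\|^2_{\H^1(\Omega_j)}\lesssim\kappa\|\vv_h\|^2_{\H^1(\Omega)}\approxeq\kappa\|\nabla\vv_h\|^2_\Omega$. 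Taking the square root yields the factor $\kappa^{1/2}\|\nabla\vv_h\|_\Omega$ and completes the second bound.

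The main subtlety I anticipate is keeping the trace-norm bookkeeping consistent. The dual norm $\|\xii^j\|_{\H_h^{-1/2}(\Gamma_j)}$ is defined through the full trace space $\H^{1/2}(\Gamma_j)=\H_0^1(\Omega)|_{\Gamma_j}$, yet in the first estimate I test against global functions $\vv_h\in\X_0^h$ rather than against trace data directly; the lifting operator $\gamma^{-1}$ and the chain of inequalities in (\ref{inf-sup2}) are precisely what reconciles these two viewpoints, so I must invoke that estimate rather than bound the trace pairing naively. A second delicate point is the justification that $\|\vv_h\|_{\H^{1/2}(\Gamma_j)}\lesssim\|\vv_h\|_{\H^1(\Omega_j)}$ with a constant uniform in $j$; this is a local trace inequality on each patch, and its uniformity relies on the patches $\Omega_j$ being shape-regular and of comparable scale $O(H)$ as recorded in (\ref{3281411}). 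Everything else is a routine application of Cauchy--Schwarz and the norm equivalence $\|\cdot\|_{1,D}\approxeq\|\nabla\cdot\|_{0,D}$ on $H_0^1$.
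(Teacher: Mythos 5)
Your proposal is correct and follows essentially the same route as the paper: the first bound comes from the inf-sup property (\ref{inf-sup2}) combined with the error equation (\ref{errorequj}) and Cauchy--Schwarz (the paper compresses this to ``a direct result of (\ref{inf-sup2})'', which your argument simply spells out), and the second bound uses exactly the paper's chain of duality on each $\Gamma_j$, the local trace inequality into $\H^1(\Omega_j)$, discrete Cauchy--Schwarz over $j$, and the finite-overlap constant $\kappa$.
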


\begin{proof} The first estimate is a direct result of property (\ref{inf-sup2}).
For the second estimate, by the definition of
$\|\cdot\|_{\H_h^{-\frac12}(\Gamma_j)}$, we have
$\forall\vv_h\in\X_0^h$
\begin{eqnarray*}
&&\sum\limits_{j=1}^N<\xii^j,\vv_h>_{\Gamma_j}\leq\sum\limits_{j=1}^N\|\xii^j\|_{\H_h^{-\frac12}(\Gamma_j)}\|\vv_h\|_{\H^\frac12(\Gamma_j)}\lesssim\sum\limits_{j=1}^N\|\xii^j\|_{\H_h^{-\frac12}(\Gamma_j)}\|\vv_h\|_{\H^\frac12(\partial\Omega_j)}\\
&&\qquad\lesssim \sum\limits_{j=1}^N\|\xii^j\|_{\H_h^{-\frac12}(\Gamma_j)}\|\vv_h\|_{\H^1(\Omega_j)}\leq(\sum\limits_{j=1}^N\|\xii^j\|^2_{\H_h^{-\frac12}(\Gamma_j)})^\frac12(\sum\limits_{j=1}^N\|\vv_h\|^2_{\H^1(\Omega_j)})^\frac12\\
&&\qquad\leq\kappa^\frac12(\sum\limits_{j=1}^N\|\xii^j\|^2_{\H_h^{-\frac12}(\Gamma_j)})^\frac12\|\nabla\vv_h\|_\Omega.
\end{eqnarray*}
This is the second estimate of the lemma.
\end{proof}

As to the quantity of
$\|\nabla\e_{\u,h}^j\|_{\Omega}+\|e_{p,h}^j\|_\Omega$,
combining the identities
\begin{eqnarray*}
&&\|\nabla\e_{\u,h}^j\|_{\Omega}^2=\|\nabla(\hat\u_H^j-\hat\u_{H,h}^j)\|_{\Omega}^2=\|\nabla\hat \u_H^j\|_{\Omega\backslash\Omega_j}^2+\|\nabla(\hat\u_H^j-\hat\u_{H,h}^j)\|_{\Omega_j}^2,\\
&&\|e_{p,h}^j\|_{\Omega}^2=\|\hat p_H^j-\hat
p_{H,h}^j\|_{\Omega}^2=\|\hat
p_H^j\|_{\Omega\backslash\Omega_j}^2+\|\hat p_H^j-\hat
p_{H,h}^j\|_{\Omega_j}^2,
\end{eqnarray*}
and the equations satisfied by $[\e_{\u,h}^j|_{\Omega_j},e_{p,h}^j|_{\Omega_j}]$
\begin{equation*}
\left\{\begin{array}{ll}
a_{\Omega_j}(\e_{\u,h}^j|_{\Omega_j},\vv_h)+d_{\Omega_j}(e_{p,h}^j|_{\Omega_j},\vv_h)=0,\quad&\forall\vv_h\in \X_0^h(\Omega_j),\\
d_{\Omega_j}(q_h,\e^j_{\u,h}|_{\Omega_j})=0,\quad&\forall q_h\in Q^h(\Omega_j),\\
\e_{\u,h}^j|_{\partial\Omega_j}=\hat\u_H^j|_{\partial\Omega_j},
\end{array}\right.
\end{equation*}
 we have by virtue of  $\hat\u_H^j\in\X_0^h$
$$
\|\nabla\e_{\u,h}^j\|_{\Omega_j}+\|e_{p,h}^j\|_{\Omega_j}\lesssim
\|\hat \u_H^j\|_{\H^\frac12(\partial\Omega_j)}\leq\|\hat
\u_H^j\|_{\H^\frac12(\partial(\Omega\backslash\Omega_j))} { \leq}
\|\nabla\hat\u_H^j\|_{\Omega\backslash\Omega_j},
$$
which is arriving at
\begin{equation}\label{S1}
\|\nabla\e_{\u,h}^j\|_{\Omega_j}+\|e_{p,h}^j\|_{\Omega_j}\lesssim\|\nabla\hat\u_H^j\|_{\Omega\backslash\Omega_j}.
\end{equation}

Then we present the following two results related with the Sobolev space
$\H_0^1(\Omega)$ to make further analysis concerning on (\ref{S1}).
\begin{lemma}\label{LEH01}{\it
Let  ${\cal D}\subset\Omega$ with $\mbox{diam}({\cal D})\approxeq
H$ be any convex subdomain of $\Omega$. Then we have
$$
\|\w\|^2_{\L^2(\partial{\cal D})}\lesssim
H^{d-1}\beta_d(H)\|\nabla\w\|^2_{\Omega\backslash{\cal
D}}\quad\forall \w\in\H_0^1(\Omega),
$$
where $\beta_d(H)=H^{-1}$ when $d=3$ and $\beta_d(H)=|\ln H|$ when
$d=2$. }
\end{lemma}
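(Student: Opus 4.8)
The plan is to reduce the two-sided trace estimate to a one-dimensional integration along rays emanating from an interior point of ${\cal D}$, exploiting the homogeneous boundary condition $\w|_{\partial\Omega}=0$ to annihilate the boundary contribution on $\partial\Omega$, and to extract the dimension-dependent factor $\beta_d(H)$ from a weighted Cauchy--Schwarz inequality. By density of smooth functions vanishing on $\partial\Omega$ in $\H_0^1(\Omega)$ and the continuity of the trace map $\H^1(\Omega)\to\L^2(\partial{\cal D})$, it suffices to establish the bound for a smooth $\w$ and for each scalar component $w$ of $\w$ separately; summing over the $d$ components then recovers the stated vector inequality.

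First I would fix a point $x_0$ in the interior of ${\cal D}$ for which $B_{c_1H}(x_0)\subseteq{\cal D}\subseteq B_{c_2H}(x_0)$ with constants depending only on the shape-regularity of ${\cal D}$; since ${\cal D}$ is convex with $\mbox{diam}({\cal D})\approxeq H$ and $\Omega$ is convex and bounded, both ${\cal D}$ and $\Omega$ are star-shaped with respect to $x_0$. Writing points as $x=x_0+s\omega$ with $\omega\in S^{d-1}$, I parametrize $\partial{\cal D}$ by $y(\omega)=x_0+\rho(\omega)\omega$ with $\rho(\omega)\approxeq H$, and let $s=R(\omega)\lesssim 1$ be the value at which the ray leaves $\Omega$. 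By convexity the segment $\{x_0+s\omega:\rho(\omega)<s<R(\omega)\}$ lies entirely in $\Omega\backslash{\cal D}$, and since $w$ vanishes at $s=R(\omega)$ we may write $w(y(\omega))=-\int_{\rho(\omega)}^{R(\omega)}\partial_s w(x_0+s\omega)\,ds$.

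Next, applying the Cauchy--Schwarz inequality with the weight $s^{d-1}$ yields
$$|w(y(\omega))|^2\le\Big(\int_{\rho(\omega)}^{R(\omega)}\frac{ds}{s^{d-1}}\Big)\Big(\int_{\rho(\omega)}^{R(\omega)}|\partial_s w|^2 s^{d-1}\,ds\Big).$$
The first factor is exactly where the dimension enters: using $\rho(\omega)\gtrsim H$ and $R(\omega)\lesssim1$ one finds $\int_\rho^R s^{1-d}\,ds\lesssim H^{-1}=\beta_3(H)$ when $d=3$ and $\int_\rho^R s^{-1}\,ds\lesssim|\ln H|=\beta_2(H)$ when $d=2$. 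Integrating over $\partial{\cal D}$ through the surface Jacobian $J(\omega)$ (with $dS=J(\omega)\,d\sigma(\omega)$ and, for a convex body trapped between the two balls, $J(\omega)\lesssim H^{d-1}$), I pull the uniform bound $J(\omega)\int_\rho^R s^{1-d}\,ds\lesssim H^{d-1}\beta_d(H)$ out of the integral and recognize the remaining expression $\int_{S^{d-1}}\int_\rho^R|\partial_s w|^2 s^{d-1}\,ds\,d\sigma(\omega)$ as the polar-coordinate form of $\int_{\Omega\backslash{\cal D}}|\partial_r w|^2\,dx\le\|\nabla w\|_{\Omega\backslash{\cal D}}^2$, where $\partial_r w=\nabla w\cdot\omega$ is the radial derivative. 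This gives the claimed estimate for one component.

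The step I expect to require the most care is the geometric bookkeeping of the radial parametrization: verifying that $\rho(\omega)\approxeq H$ and that the surface Jacobian satisfies $J(\omega)\lesssim H^{d-1}$ uniformly in $\omega$, which is where convexity together with $\mbox{diam}({\cal D})\approxeq H$ (equivalently, ${\cal D}$ being comparable to a ball of radius $H$) is essential --- for a degenerate thin ${\cal D}$ the factor $\int_\rho^R s^{1-d}\,ds$ would blow up. The sharp dimensional factor $\beta_d(H)$ is genuinely a capacity effect produced precisely by the $s^{1-d}$ weight; the remaining manipulations --- the density/fundamental-theorem-of-calculus justification along rays for $\H^1$ functions, and the passage from the radial derivative to the full gradient --- are routine.
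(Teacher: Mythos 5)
Your proposal is correct and is essentially the same argument as the paper's proof: both fix an interior point of ${\cal D}$ at distance $\approxeq H$ from $\partial{\cal D}$ (star-shapedness from convexity), write $\w$ on $\partial{\cal D}$ via the fundamental theorem of calculus along radial rays out to $\partial\Omega$ where $\w$ vanishes, apply Cauchy--Schwarz with the weight $\rho^{d-1}$ to produce the factor $\int\rho^{1-d}d\rho\lesssim\beta_d(H)$, and integrate over the sphere with the $H^{d-1}$ surface factor to recover $\|\nabla\w\|^2_{\Omega\backslash{\cal D}}$. The only differences are presentational (your explicit density/componentwise reduction and surface-Jacobian bookkeeping versus the paper's direct polar-coordinate computation), and you rightly flag the shape-regularity of ${\cal D}$ that both arguments implicitly require.
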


\begin{proof}
Since ${\cal D}$ and $\Omega$ are convex domains,
there exists a point $P\in{\cal D}$ such that
$\mbox{dist}(P,\partial{\cal D})\approxeq H$ and
$\overline{PP'}\subset{\cal D}$ for any $P'\in\partial{\cal D}$ and
$\overline{PP'}\subset\Omega$ for any $P'\in\partial\Omega$. If we
denote $\Omega^c=\Omega\backslash{\cal D}$, we can establish a local
polar or spherical coordinate $(\rho,\omega)$ with origin at this
 point $P$ corresponding to 2-D to 3-D case,
see Fig.\ref{fig1} for the sketch of such setting in  the 2-D case.
\begin{figure}[ht]
   \begin{center}
        \includegraphics[width=0.6\linewidth]{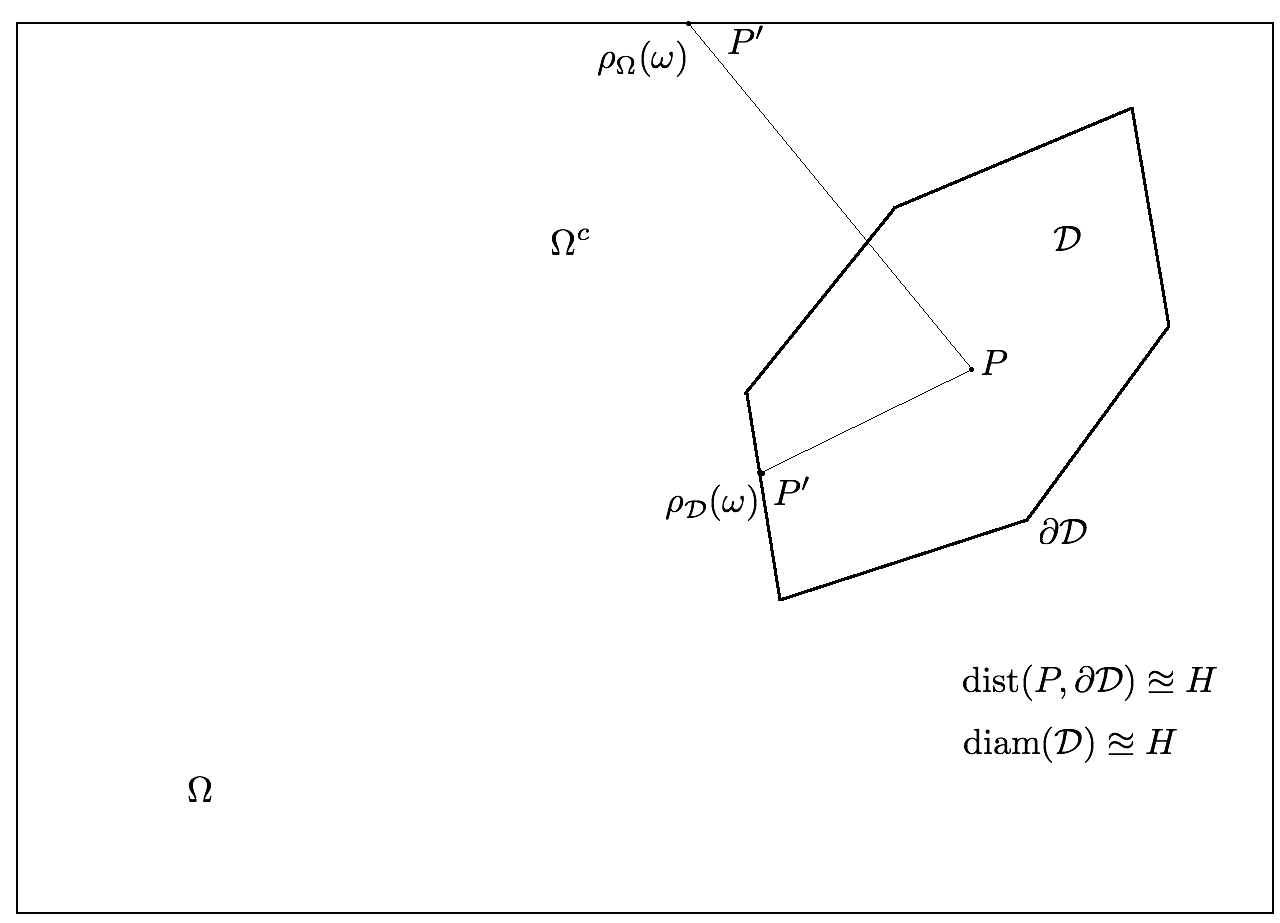}\\
        \caption{\small Local polar coordinate in 2-dimensional domain. \label{fig1}}
    \end{center}
\end{figure}
Here $\rho$ is the radius distant between any point $P'$ in $\Omega$ and the origin $P$, and $\omega=\omega_1$ in 2-D case and $\omega=(\omega_1,\omega_2)$
in 3-D case. The Jacobi determinant of the transformation between
the Cartesian coordinate $(x_1,\cdots, x_d)$ and $(\rho,\omega)$ is
$$
J=\frac{D(x_1,\cdots,x_d)}{D(\rho,\omega_1\cdots,\omega_{d-1})}=\rho^{d-1}\delta(\omega),\quad \mbox{with} \quad
\delta(\omega)=\sin^{d-2}\omega_1.
$$
Moreover, we use $\rho_\Omega(\omega)$ and $\rho_{\cal D}(\omega)$ to characterize
the boundary points of $\Omega$ and ${\cal D}$, respectively.

For any $\w\in\H_0^1(\Omega)$, since $\rho_{\cal D}(\omega)\approxeq
H$ and $H\lesssim\rho_\Omega(\omega)\lesssim 1$, and $\w(\rho_\Omega(\omega), \omega)=0$, we have
\begin{eqnarray*}
    &&|\w(\rho_{\cal D}(\omega),\omega)|=|-\int_{\rho_\Omega(\omega)}^{\rho_{\cal D}(\omega)}\frac{\partial \w}{\partial\rho}d\rho|\\
    &&\qquad\leq (\int^{\rho_\Omega(\omega)}_{\rho_{\cal D}(\omega)}\frac{1}{\rho^{d-1}}d\rho)^\frac12(\int^{\rho_\Omega(\omega)}_{\rho_{\cal D}(\omega)}\rho^{d-1}|\frac{\partial \w}{\partial\rho}|^2d\rho)^\frac12\\
    &&\qquad \lesssim \beta_d^\frac12(H)(\int^{\rho_\Omega(\omega)}_{\rho_{\cal D}(\omega)}\rho^{d-1}|\frac{\partial \w}{\partial\rho}|^2d\rho)^\frac12.
\end{eqnarray*}
Thus, if we denote by $\omega_d$ the unit ball in $R^d$, we have
\begin{eqnarray*}
    &&\|\w\|^2_{\L^2(\partial{\cal D})}=\int_{\partial\omega_d}(\rho_{\cal D}(\omega))^{d-1}| \w(\rho,\omega)|^2\delta(\omega)d\omega\\
    &&\qquad\lesssim \int_{\partial\omega_d}(\rho_{\cal D}(\omega))^{d-1}\beta_d(H)(\int^{\rho_\Omega(\omega)}_{\rho_{\cal D}(\omega)}\rho^{d-1}|\frac{\partial\w}{\partial\rho}|^2 d\rho)
    \delta(\omega)d\omega\\
    &&\qquad\lesssim H^{d-1}\beta_d(H)\|\nabla\w\|^2_{\Omega\backslash{\cal D}}.
\end{eqnarray*}
The proof is complete.
\end{proof}

\begin{lemma}\label{COR10241503}{\it
If ${\cal D}'\subset{\cal D}\subset\Omega$ are any two convex
subdomains of $\Omega$ with $\mbox{diam}({\cal D})\approxeq H$,
$\mbox{dist}(\partial{\cal D},\partial{\cal D}')\approxeq h$ and
$h<H$. Then we have
$$
\|\w\|^2_{{\cal D}\backslash{\cal D}'}\lesssim
hH^{d-1}\beta_d(H)\|\nabla\w\|^2_{\Omega\backslash{\cal
D}'}\quad\forall \w\in\H_0^1(\Omega).
$$
}
\end{lemma}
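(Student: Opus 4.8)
The plan is to reuse the polar/spherical coordinate machinery from the proof of Lemma~\ref{LEH01}, but to keep track of the radial variable across the whole shell ${\cal D}\backslash{\cal D}'$ rather than only evaluating $\w$ on the single surface $\partial{\cal D}$. Heuristically, Lemma~\ref{LEH01} already controls $\|\w\|_{\L^2(\partial{\cal D}_r)}^2$ for each convex level surface $\partial{\cal D}_r$ sweeping out the shell; since the shell has radial thickness $\approxeq h$, integrating such a surface estimate across the shell multiplies the bound of Lemma~\ref{LEH01} by an extra factor $h$, which is exactly the gap between the two statements. Rather than constructing an explicit foliation of ${\cal D}\backslash{\cal D}'$ by convex domains (which is awkward because ${\cal D}$ and ${\cal D}'$ need not be parallel bodies of one another), I would carry out the integration directly in polar coordinates.

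First I would fix a center $P\in{\cal D}'$ with $\mbox{dist}(P,\partial{\cal D}')\approxeq H$ and introduce the coordinates $(\rho,\omega)$ of Lemma~\ref{LEH01}, with radial functions $\rho_{{\cal D}'}(\omega)$, $\rho_{\cal D}(\omega)$ and $\rho_\Omega(\omega)$ describing $\partial{\cal D}'$, $\partial{\cal D}$ and $\partial\Omega$. Since $\w\in\H_0^1(\Omega)$ vanishes on $\partial\Omega$, for any radius $r$ in the shell I would write $\w(r,\omega)=-\int_r^{\rho_\Omega(\omega)}\partial_\rho\w\,d\rho$ and apply the weighted Cauchy--Schwarz inequality exactly as in Lemma~\ref{LEH01}, obtaining the pointwise bound
\[
|\w(r,\omega)|^2\lesssim\Big(\int_r^{\rho_\Omega(\omega)}\rho^{1-d}\,d\rho\Big)\Big(\int_{\rho_{{\cal D}'}(\omega)}^{\rho_\Omega(\omega)}\rho^{d-1}|\partial_\rho\w|^2\,d\rho\Big)\lesssim\beta_d(H)\,G(\omega),
\]
where $G(\omega)=\int_{\rho_{{\cal D}'}(\omega)}^{\rho_\Omega(\omega)}\rho^{d-1}|\partial_\rho\w|^2\,d\rho$ is independent of $r$. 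Here I use that $r\geq\rho_{{\cal D}'}(\omega)\gtrsim H$ to bound the first factor by $\beta_d(H)$ in both $d=2$ and $d=3$, precisely the computation already done in Lemma~\ref{LEH01}.

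Then I would integrate $|\w|^2$ over the shell in these coordinates,
\[
\|\w\|^2_{{\cal D}\backslash{\cal D}'}=\int_{\partial\omega_d}\int_{\rho_{{\cal D}'}(\omega)}^{\rho_{\cal D}(\omega)}|\w(r,\omega)|^2\,r^{d-1}\delta(\omega)\,dr\,d\omega\lesssim\beta_d(H)\int_{\partial\omega_d}G(\omega)\Big(\int_{\rho_{{\cal D}'}(\omega)}^{\rho_{\cal D}(\omega)}r^{d-1}\,dr\Big)\delta(\omega)\,d\omega,
\]
and bound the inner radial integral by $\int_{\rho_{{\cal D}'}(\omega)}^{\rho_{\cal D}(\omega)}r^{d-1}\,dr\lesssim H^{d-1}\big(\rho_{\cal D}(\omega)-\rho_{{\cal D}'}(\omega)\big)\lesssim hH^{d-1}$. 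Recognizing that $\int_{\partial\omega_d}G(\omega)\delta(\omega)\,d\omega\leq\|\nabla\w\|_{\Omega\backslash{\cal D}'}^2$ then yields the claim.

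The main obstacle is the geometric input needed to make the last radial estimate uniform in $\omega$: I must know that throughout the shell $\rho\approxeq H$ and, more delicately, that the radial thickness satisfies $\rho_{\cal D}(\omega)-\rho_{{\cal D}'}(\omega)\lesssim h$. The lower bound $\gtrsim h$ on the thickness is immediate from $\mbox{dist}(\partial{\cal D},\partial{\cal D}')\approxeq h$, but the upper bound requires that the rays emanating from $P$ meet $\partial{\cal D}$ and $\partial{\cal D}'$ transversally, i.e.\ that these boundaries are never nearly tangent to the radial direction. This is where convexity and the fatness of the domains (the same fatness already used implicitly in Lemma~\ref{LEH01} to choose $P$ with $\mbox{dist}(P,\partial{\cal D})\approxeq H$) enter: choosing $P$ deep inside ${\cal D}'$ forces a uniform lower bound on the angle between each ray and the boundary normals, so that the Euclidean normal gap $\approxeq h$ and the radial gap are comparable. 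Once this transversality is in hand the remainder is the routine weighted one-dimensional estimate already validated in Lemma~\ref{LEH01}.
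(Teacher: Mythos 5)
Your proposal is correct and follows essentially the same route as the paper's own proof: the same choice of center $P\in{\cal D}'$, the same pointwise radial bound $|\w(\rho,\omega)|^2\lesssim\beta_d(H)\,G(\omega)$ inherited from the argument of Lemma~\ref{LEH01}, and integration over the shell in polar coordinates using that its radial thickness is $\lesssim h$. If anything, your direct bound on $\int_{\rho_{{\cal D}'}(\omega)}^{\rho_{\cal D}(\omega)}r^{d-1}\,dr$ is cleaner than the paper's detour through an extra Cauchy--Schwarz in $\rho$, and your explicit discussion of the transversality (convexity plus a deep center) needed to get $\rho_{\cal D}(\omega)-\rho_{{\cal D}'}(\omega)\lesssim h$ makes precise a point the paper leaves implicit.
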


\begin{proof} Like what has been done in the proof of Lemma \ref{LEH01}, for ${\cal D}'\subset{\cal D}\subset\Omega$, we can choose some $P\in {\cal D}'$ such that $\mbox{dist}(P,\partial{\cal D}'),\mbox{dist}(P,\partial{\cal D})\approx H$, then we can establish a local
polar or spherical coordinate $(\rho,\omega)$ with origin at this
point $P$ corresponding to 2-D to 3-D case. And from the proof of Lemma \ref{LEH01}, we know for any point $(\rho,\omega)\in {\cal D}\backslash{\cal D}'$:
\begin{eqnarray*}
	&&|\w(\rho,\omega)|=|-\int_{\rho_\Omega(\omega)}^{\rho}\frac{\partial\w}{\partial\rho'}d\rho'|\leq (\int^{\rho_\Omega(\omega)}_{\rho}\frac{1}{\rho'^{d-1}}d\rho')^\frac12(\int^{\rho_\Omega(\omega)}_{\rho}\rho'^{d-1}|\frac{\partial\w}{\partial\rho'}|^2d\rho')^\frac12\\
	&&\qquad \lesssim \beta_d^\frac12(H)(\int^{\rho_\Omega(\omega)}_{\rho_{\cal D'}(\omega)}\rho'^{d-1}|\frac{\partial\w}{\partial\rho'}|^2d\rho')^\frac12.
\end{eqnarray*}
Then we have
\begin{eqnarray*}
	&&\|\w\|_{D\backslash D'}^2=\int_{\partial\omega_d}\int_{\rho_{D'(\omega)}}^{\rho_D(\omega)}\w^2\rho^{d-1}\delta(\omega)d\rho d\omega\\
	&&\quad\lesssim	\int_{\partial\omega_d}\int_{\rho_{D'(\omega)}}^{\rho_D(\omega)}\beta_d(H)[\int^{\rho_\Omega(\omega)}_{\rho_{\cal D'}(\omega)}\rho'^{d-1}|\frac{\partial \w}{\partial\rho'}|^2d\rho']\rho^{d-1}\delta(\omega)d\rho d\omega\\
	&&\quad\lesssim H^{d-1}\beta_d(H)\int_{\partial\omega_d}\int_{\rho_{D'(\omega)}}^{\rho_D(\omega)}[\int^{\rho_\Omega(\omega)}_{\rho_{\cal D'}(\omega)}\rho'^{d-1}|\frac{\partial \w}{\partial\rho'}|^2d\rho']\delta(\omega)d\rho d\omega\\
	&&\quad\leq H^{d-1}\beta_d(H)\int_{\partial\omega_d}(\int_{\rho_{D'(\omega)}}^{\rho_D(\omega)}[\int^{\rho_\Omega(\omega)}_{\rho_{\cal D'}(\omega)}\rho'^{d-1}|\frac{\partial \w}{\partial\rho'}|^2d\rho']^2\delta^2(\omega)d\rho)^\frac12(\int_{\rho_{D'(\omega)}}^{\rho_D(\omega)}d\rho)^\frac12 d\omega\\
	&&\quad\lesssim hH^{d-1}\beta_d(H)\int_{\partial\omega_d}\int^{\rho_\Omega(\omega)}_{\rho_{\cal D'}(\omega)}\rho'^{d-1}|\frac{\partial\w}{\partial\rho'}|^2d\rho'\delta(\omega)d\omega
	=hH^{d-1}\beta_d(H)\|\nabla\w\|^2_{\Omega\backslash D'},
\end{eqnarray*}
which completes the proof.
\end{proof}

Right now, we can give a more { rigorous} estimate for $\|\nabla
\e_{\u,h}^j\|_{\Omega}^2+\|e_{p,h}^j\|_\Omega^2$ based on  (\ref{S1}),
which will play a crucial role in the analysis of this section.

\begin{lemma}\label{LE2}
{\it Defining
$$
\alpha_d=\left\{\begin{array}{ll}
\frac{c}{|\ln H|^2}, \quad & d=2,\\
\frac{c}{|\ln H|}, & d=3,
\end{array}\right.
$$
where $c>0$ is a positive constant independent of $H$, $h$
and any subdomain $\Omega_j$. Then we have
\begin{eqnarray}\label{L44}
\|\nabla\e_{\u,h}^j\|_{\Omega}+\|e_{p,h}^j\|_\Omega\lesssim
H^{\alpha_d}(\|\nabla\hat\u_H^j\|_{\Omega}+\|\hat p_H^j\|_{\Omega}).
\end{eqnarray}
}
\end{lemma}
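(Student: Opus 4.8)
The plan is to first reduce the claim to a pure decay estimate, and then to establish that decay by a Caccioppoli argument fed into Lemmas \ref{LEH01} and \ref{COR10241503}. Inserting (\ref{S1}) into the two identities for $\|\nabla\e_{\u,h}^j\|_\Omega^2$ and $\|e_{p,h}^j\|_\Omega^2$ displayed just above it, the interior term of each identity is absorbed by $\|\nabla\hat\u_H^j\|_{\Omega\backslash\Omega_j}^2$, so that it suffices to prove $\|\nabla\hat\u_H^j\|_{\Omega\backslash\Omega_j}^2+\|\hat p_H^j\|_{\Omega\backslash\Omega_j}^2\lesssim H^{2\alpha_d}\big(\|\nabla\hat\u_H^j\|_\Omega^2+\|\hat p_H^j\|_\Omega^2\big)$. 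The structural fact I would lean on throughout is that the right-hand side of (\ref{errequh2}) is supported in $D_j=\mbox{supp}\,\phi_j\subset\Omega_j$; hence $[\hat\u_H^j,\hat p_H^j]$ is a \emph{discrete Stokes-harmonic} pair on $\Omega\backslash D_j$, testing to zero against every $[\vv_h,q_h]\in\X_0^h\times Q^h$ supported there. The target inequality is then the quantitative assertion that such a pair retains only an $H^{2\alpha_d}$-fraction of its energy beyond the one-layer collar $\Omega_j\backslash D_j$, whose width is $\approxeq H$ by (\ref{3281411}).

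Before treating the velocity I would dispose of the pressure tail. Taking $q_h=0$ in (\ref{errequh2}) gives $d(\hat p_H^j,\vv_h)=-a(\hat\u_H^j,\vv_h)$ for all $\vv_h\in\X_0^h$ supported in $\Omega\backslash D_j$; combining this with the (local) inf--sup stability of the pair $\X_0^h\times Q_H^h$ recorded after (\ref{hierarchical}) yields $\|\hat p_H^j\|_{\Omega\backslash\Omega_j}\lesssim\|\nabla\hat\u_H^j\|_{\Omega\backslash\Omega_j}$. This reduces everything to a single scalar quantity, the exterior Dirichlet energy $\|\nabla\hat\u_H^j\|_{\Omega\backslash\Omega_j}$.

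For that energy I would run a Caccioppoli-type argument. Pick a cutoff $\eta$ with $\eta\equiv0$ on $D_j$, $\eta\equiv1$ on $\Omega\backslash\Omega_j$, and $|\nabla\eta|\lesssim H^{-1}$ on the collar, and test the homogeneous relation with a finite element interpolant of $\eta^2\hat\u_H^j$ (the interpolation error being absorbed via A1--A2). Since $\nabla\cdot\hat\u_H^j=0$, the pressure contributes only the cross term $\int\hat p_H^j\,\eta\,\hat\u_H^j\!\cdot\!\nabla\eta$, which I would control and partially absorb into the left-hand side using the pressure estimate from the previous step. The outcome is a Caccioppoli inequality $\|\nabla\hat\u_H^j\|_{\Omega\backslash\Omega_j}^2\lesssim H^{-2}\|\hat\u_H^j\|_{\Omega_j\backslash D_j}^2$. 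Feeding the collar norm into Lemma \ref{COR10241503} with ${\cal D}=\Omega_j$, ${\cal D}'=D_j$ converts the shell $L^2$-norm into a Dirichlet energy at the cost of the dimensional weight $\beta_d(H)$, while Lemma \ref{LEH01} supplies the matching trace control on $\partial\Omega_j$.

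The last step, which I expect to be the main obstacle, is to upgrade the resulting weight $H^{d-2}\beta_d(H)$ --- which is only $\approxeq1$, not strictly below one --- into the genuine contraction $H^{2\alpha_d}$. I would achieve this by grading the collar into a family of nested convex sub-layers between $D_j$ and $\Omega_j$, applying the Caccioppoli/Lemma \ref{COR10241503} pair across each and optimizing their number (equivalently, by an exponentially weighted energy); the accumulated squared factor then behaves like $\exp\!\big(-c/(H^{d-2}\beta_d(H))\big)$, which is the fixed constant $e^{-c}\approxeq H^{2\alpha_3}$ in 3-D (where $H\beta_3(H)\approxeq1$) and the weaker $e^{-c/|\ln H|}\approxeq H^{2\alpha_2}$ in 2-D (where the extra $\beta_2(H)=|\ln H|$ slows the decay). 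The delicate points are keeping the Stokes pressure cross-term uniformly controlled along this iteration and tracking the constant inside $\beta_d(H)$ sharply enough that, after absorption, a factor strictly less than one survives and the dimension-dependent split of $\alpha_d$ in (\ref{L44}) emerges.
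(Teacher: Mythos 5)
Your overall skeleton --- reducing (\ref{L44}) via (\ref{S1}) to a decay estimate for the exterior energy of $[\hat\u_H^j,\hat p_H^j]$, exploiting that this pair is discretely Stokes-harmonic on $\Omega\backslash D_j$, and converting collar $L^2$-norms into gradients through Lemmas \ref{LEH01} and \ref{COR10241503} --- coincides with the paper's first moves. But the engine you propose, an outside-in Caccioppoli contraction, has a gap that the final ``grading/optimization'' step cannot repair. Across one layer of width $t$ your two ingredients combine into (exterior energy) $\leq q\times$(previous exterior energy) with $q=C\,t^{-1}H^{d-1}\beta_d(H)$, and the argument needs $q<1$. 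Since the whole collar has width $\approxeq H$ by (\ref{3281411}), every admissible $t\lesssim H$ forces $q\gtrsim CH^{d-2}\beta_d(H)$; in 2-D this is $C|\ln H|\to\infty$ (so it is not ``only $\approxeq 1$'' as you state --- it diverges, and no subdivision gives a contraction), while in 3-D it equals the uncontrolled product $C$ of the Caccioppoli and Lemma \ref{COR10241503} constants, so the proof closes only if that constant happens to lie below one. Grading into ${\cal M}$ sub-layers makes matters worse, not better: each factor becomes $q=C{\cal M}H^{d-2}\beta_d(H)$, so the accumulated factor is $({\cal M}a)^{\cal M}$ with $a=CH^{d-2}\beta_d(H)$, whose minimizer ${\cal M}\approx(ea)^{-1}$ is a fictitious fractional layer count ($<1$ in 2-D, an $O(1)$ constant-dependent number in 3-D). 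Over genuine integers ${\cal M}\geq 1$ the product never falls below $a$, hence never below $|\ln H|$ in 2-D; your claimed $\exp(-c/(H^{d-2}\beta_d(H)))$ is an artifact of that optimization, and an exponentially weighted energy cannot rescue it because the underlying one-layer inequality is itself not a contraction.

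The paper escapes precisely this trap by reversing the direction of the iteration so that no constant is ever required to be less than one. Its annuli $\Omega_j^k$ have the \emph{fine} width $h$, so there are ${\cal M}\approxeq H/h$ of them, and on each nested domain $\tilde\Omega_j^k$ (the exterior region $\Omega\backslash\Omega_j$ together with the first $k$ annuli) it invokes the discrete inf-sup stability of the Stokes pair, combined with a cutoff-and-interpolation identity (equation (\ref{10291438})) that localizes the test functional onto the single outermost annulus, to obtain the reverse-type bound
\begin{equation*}
\|\nabla\hat\u_H^j\|_{\tilde\Omega_j^{k}}+\inf_{\theta}\|\hat p_H^j-\theta\|_{\tilde\Omega_j^{k}}
\lesssim h^{-\frac12}H^{\frac{d-1}{2}}\beta_d^{\frac12}(H)\bigl(\|\nabla\hat\u_H^j\|_{\Omega_j^k}+\|\hat p_H^j-\theta_{j,\hat p_H^j}^k\|_{\Omega_j^k}\bigr).
\end{equation*}
Squaring and inverting yields (\ref{L44-3}): the outermost annulus carries at least the fraction $\epsilon=c\,hH^{1-d}\beta_d^{-1}(H)$ of the energy accumulated so far, whence the total energy dominates $(1+\epsilon)^{\cal M}$ times the energy on $\Omega\backslash\Omega_j$. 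The crucial structural point is that $1+\epsilon>1$ no matter how small the constant $c$ is; the smallness of $\epsilon$ is compensated by ${\cal M}\approxeq H/h$ being large, and $(1+\epsilon)^{-{\cal M}}\approxeq\exp(-{\cal M}\epsilon)\approxeq H^{\alpha_d}$ in both dimensions. (The paper also carries velocity and pressure jointly through the inf-sup condition, whereas your preliminary pressure step controls $\hat p_H^j$ on $\Omega\backslash\Omega_j$ only modulo constants.) The missing idea is therefore this inside-out, inf-sup-based energy-accumulation inequality, not a sharper bookkeeping of Caccioppoli constants.
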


\begin{proof}{  The proof of this lemma is divided into four steps.

\emph{Step 1.} } Firstly, we use the same argument to split the region $\Omega_j\backslash
D_j$ for each $j$ as in \cite{HOUDU}.  We recall the sketch of a domain partition in Fig. \ref{MYFIG}.
\begin{center}
    \begin{figure}[ht]
        \centering
        \includegraphics[width=0.8\linewidth]{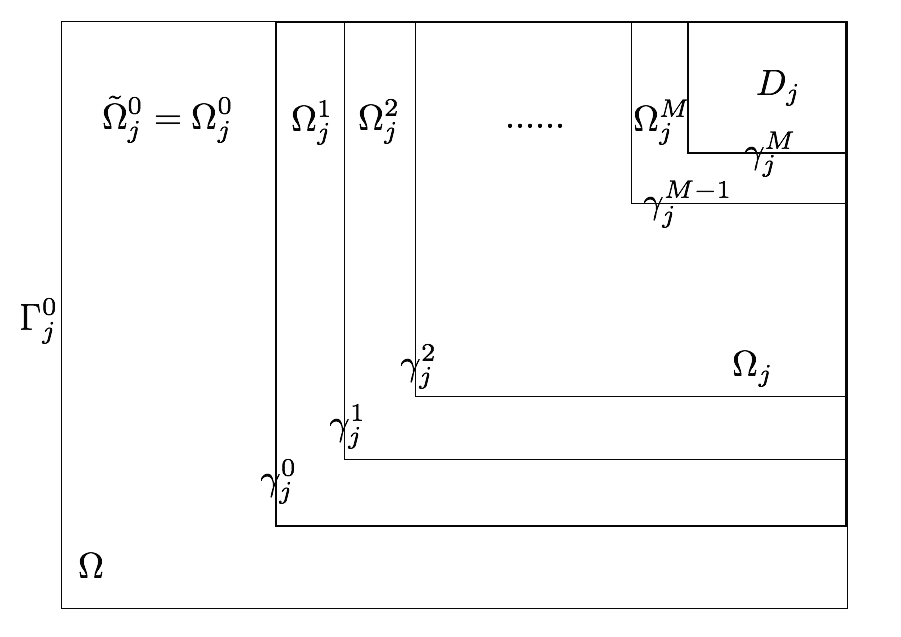}
        \caption{\small Division of the region $\Omega_j\backslash D_j$}\label{MYFIG}
    \end{figure}
\end{center}

Right now, we can define a series of disjoint annular zones as $\Omega_j^0,\Omega_j^1,\cdots,\Omega_j^{\cal M}$, and construct a sequence of subdomains as
$$
\tilde\Omega_j^k=\bigcup\limits_{i=0}^k\Omega_j^i,\quad
k=0,1,2,\cdots,{\cal M},
$$
with the property
$$
\tilde\Omega_j^0\subset\subset\tilde\Omega_j^1\subset\subset\cdots\subset\subset\tilde\Omega_j^{\cal
M}.
$$

Then we define
$$\gamma_j^k=\partial\tilde\Omega_j^k\backslash \partial\Omega,\quad\Gamma_j^k=\partial\tilde\Omega_j^k\backslash\gamma_j^k,\quad\mbox{and}\quad
\partial\tilde\Omega_j^k=\gamma_j^k\cup\Gamma_j^k,\quad\quad k=0,1,2,\cdots,{\cal M}.
$$

{  \emph{Step 2.} } {  Note that, since $\texttt{supp} \phi_j= D_j$, the right hand sides in (\ref{errequh2})
will be equal to 0 for the testing functions in $\X_0^h(\tilde\Omega_j^k)\times
M^h(\tilde\Omega_j^k)$ defined on the domain $\tilde\Omega_j^k\subset\subset \Omega\backslash D_j$}.
By denoting $M_0^h(\tilde\Omega_j^k)=M^h(\tilde\Omega_j^k)\cap
H^1_0(\tilde\Omega_j^k)$,
we clearly have
\begin{eqnarray}\label{L44-1}
B_{\tilde\Omega_j^k}([\hat\u_H^j,\hat
p_H^j],[\vv_h,q_h])=0\quad\forall[\vv_h,q_h]\in\X_0^h(\tilde\Omega_j^k)\times
M_0^h(\tilde\Omega_j^k).
\end{eqnarray}
{
For any $\theta\in R$, since $\vv_h\in \X_0^h(\tilde\Omega_j^k)$, we have
$$d_{\tilde\Omega_j^k}(\theta,\vv_h)=\int_{\tilde\Omega_j^k}\theta \nabla \cdot \vv_hdx=\theta \int_{\tilde\Omega_j^k}\nabla \cdot \vv_hdx=\theta \int_{\partial \tilde\Omega_j^k}  \vv_h\cdot n ds=0.$$
By the definition of $B$, we know
$$0=B_{\tilde\Omega_j^k}([\hat\u_H^j,\hat
p_H^j],[\vv_h,q_h])= B_{\tilde\Omega_j^k}([\hat\u_H^j,\hat
p_H^j],[\vv_h,q_h])-d_{\tilde\Omega_j^k}(\theta,\vv_h)=B_{\tilde\Omega_j^k}([\hat\u_H^j,\hat
p_H^j-\theta],[\vv_h,q_h]).$$
Then, we can split it into two parts since ${\tilde\Omega_j^k}=\tilde\Omega_j^{k-1}\cup\Omega_j^k$ as
$$0=B_{\tilde\Omega_j^k}([\hat\u_H^j,\hat
p_H^j-\theta],[\vv_h,q_h])=B_{\tilde\Omega_j^k-1}([\hat\u_H^j,\hat
p_H^j-\theta],[\vv_h,q_h])+B_{\Omega_j^k}([\hat\u_H^j,\hat
p_H^j-\theta],[\vv_h,q_h]),$$}
therefore, by simply using
\begin{eqnarray*}
&&\tilde\vv_h^{k}=\vv_h|_{\tilde\Omega_j^{k}},\quad\tilde\vv_h^{k-1}=\vv_h|_{\tilde\Omega_j^{k-1}},\quad\vv_h^k=\vv_h|_{\Omega_j^k},\\
&&\tilde q_h^{k}=q_h|_{\tilde\Omega_j^{k}},\quad\tilde q_h^{k-1}=q_h|_{\tilde\Omega_j^{k-1}},\quad
q_h^k=q_h|_{\Omega_j^k},
\end{eqnarray*}
we have for any $\theta\in R$
\begin{equation}\label{10291438}
B_{\tilde\Omega_j^{k-1}}([\hat\u_H^j,\hat
p_H^j-\theta],[\tilde\vv_h^{k-1},\tilde
q_h^{k-1}])=-B_{\Omega_j^k}([\hat\u_H^j,\hat
p_H^j-\theta],[\vv_h^k,q_h^k]).
\end{equation}

{  \emph{Step 3.}
 We firstly recall the following classical result (see for instance (2.11) in [24]):
    \begin{eqnarray*}
   ||q||_{0,\Omega}=\inf\limits_{c\in R}||q+c||_{0,\Omega}\ \ \ \forall q\in L^2_0(\Omega).
    \end{eqnarray*}
}
 Then, for convenience of expression, we denote by
$\X_E^h(\tilde\Omega_j^{k})=\X_0^h(\Omega \backslash D_j)|_{\tilde\Omega_j^{k}}$,
$M_E^h(\tilde\Omega_j^{k})=M_0^h(\Omega \backslash D_j)|_{\tilde\Omega_j^{k}}$, and
introduce
$$
\tilde\theta^{k}_{j,q_h}=\frac{1}{|\tilde\Omega_j^{k}|}\int_{\tilde\Omega_j^{k}}q_h,\quad
\theta^k_{j,q_h}=\frac{1}{|\Omega_j^k|}\int_{\Omega_j^k}q_h.
$$
Since
$\X_0^h(\tilde\Omega_j^{k})\subset\X_E^h(\tilde\Omega_j^{k})$,
{$M_E^h(\tilde\Omega_j^{k})\subset M^h(\tilde\Omega_j^{k})$} and
$\X_0^h(\tilde\Omega_j^{k})\times M^h(\tilde\Omega_j^{k})/R$ is
a stable finite element pair, we know
$\Sigma_j^{k}=\X_E^h(\tilde\Omega_j^{k})\times
M_E^h(\tilde\Omega_j^{k})/R$ is also a stable finite element pair,
that is
$$
\sup\limits_{\tilde\vv_h^{k}\in\X_E^h(\tilde\Omega_j^{k})}\frac{d_{\tilde\Omega_j^{k}}(\tilde
q_h^{k},\tilde\vv_h^{k})}{\|\nabla\tilde\vv_h^{k}\|_{\tilde\Omega_j^{k}}}\gtrsim\inf\limits_{\theta\in
R}\|\tilde q_h^{k}-\theta\|_{\tilde\Omega_j^{k}}=\|\tilde
q_h^{k}-\tilde\theta_{j,q_h}^{k}\|_{\tilde\Omega_j^{k}}.
$$
 We define a smooth function $\phi\in  { C^\infty(\tilde\Omega_j^k)}$ such that $\phi|_{\gamma_j^k}=0$ and 
$$
\mbox{supp}\,\phi=\tilde\Omega_j^k,\quad \phi(x)\equiv 1\;\forall x\in\tilde\Omega_j^{k-1},\quad
0\leq\phi\leq 1,\cancel{\quad\mbox{and}\quad| \phi(x)|\lesssim C}.
$$

{ In the following, we still use $I_h$ to denote the scalar valued Lagrange finite element interpolation, which share the same property in {\bf A1}.} Firstly, we can arrive at
\begin{eqnarray*}
&&\|\nabla\hat\u_H^j\|_{\tilde\Omega_j^{k}}+\|\hat
p_H^j-\tilde\theta_{j,\hat p_H^j}^{k}\|_{\tilde\Omega_j^{k}}=\|\nabla\hat\u_H^j\|_{\tilde\Omega_j^{k}}+\inf\limits_{\theta\in
R}\|\hat
p_H^j-\theta\|_{\tilde\Omega_j^{k}}\\
&&\lesssim\sup\limits_{[\tilde\vv_h^{k},\tilde q_h^{k}]\in\Sigma_j^{k}}\frac{B_{\tilde\Omega_j^{k}}([\hat\u_H^j,\hat p_H^j-\theta_{j,\hat p_H^j}^k],[\tilde\vv_h^{k},\tilde q_h^{k}])}{\|\nabla\tilde\vv_h^{k}\|_{\tilde\Omega_j^{k}}+\|\tilde q_h^{k}\|_{\tilde\Omega_j^{k}}}\\
&&\lesssim\sup\limits_{[\tilde\vv_h^{k},\tilde q_h^{k}]\in\Sigma_j^{k}}\frac{B_{\tilde\Omega_j^{k-1}}([\hat\u_H^j,\hat p_H^j-\theta_{j,\hat p_H^j}^k],[\tilde\vv_h^{k-1},\tilde q_h^{k-1}])+B_{\Omega_j^{k}}([\hat\u_H^j,\hat p_H^j-\theta_{j,\hat p_H^j}^k],[\vv_h^{k}, q_h^{k}])}{\|\nabla\tilde\vv_h^{k}\|_{\tilde\Omega_j^{k}}+\|\tilde q_h^{k}\|_{\tilde\Omega_j^{k}}}\\
&&\qquad=\sup\limits_{[\tilde\vv_h^{k},\tilde q_h^{k}]\in\Sigma_j^{k}}\frac{-B_{\Omega_j^{k}}([\hat\u_H^j,\hat p_H^j-\theta_{j,\hat p_H^j}^k],[I_h(\phi v_h^{k}), I_h(\phi q_h^{k})])+B_{\Omega_j^{k}}([\hat\u_H^j,\hat p_H^j-\theta_{j,\hat p_H^j}^k],[\vv_h^{k}, q_h^{k}])}{\|\nabla\tilde\vv_h^{k}\|_{\tilde\Omega_j^{k}}+\|\tilde q_h^{k}\|_{\tilde\Omega_j^{k}}}\\
&&\lesssim\sup\limits_{[\tilde\vv_h^{k},\tilde q_h^{k}]\in\Sigma_j^{k}}\frac{-B_{\Omega_j^{k}}([\hat\u_H^j,\hat p_H^j-\theta_{j,\hat p_H^j}^k],[I_h(\phi v_h^{k}), I_h(\phi q_h^{k})]) }{\|\nabla\tilde\vv_h^{k}\|_{\tilde\Omega_j^{k}}+\|\tilde q_h^{k}\|_{\tilde\Omega_j^{k}}}+\sup\limits_{[\tilde\vv_h^{k},\tilde q_h^{k}]\in\Sigma_j^{k}}\frac{ B_{\Omega_j^{k}}([\hat\u_H^j,\hat p_H^j-\theta_{j,\hat p_H^j}^k],[\vv_h^{k}, q_h^{k}])}{\|\nabla\tilde\vv_h^{k}\|_{\tilde\Omega_j^{k}}+\|\tilde q_h^{k}\|_{\tilde\Omega_j^{k}}}\\
&&:=I_1+ I_2.
\end{eqnarray*}
In the following, we will carry out further estimates for the two items.
\begin{eqnarray*}
I_1&&\qquad\lesssim\sup\limits_{{ \tilde\vv_h^{k}\in\X_E^h(\tilde\Omega_j^{k})}}\frac{|a_{\Omega_j^k}(\hat\u_H^j,I_h(\phi\vv_h^k))+d_{\Omega_j^k}(\hat p_H^j-\theta_{j,\hat p_H^j}^k,I_h(\phi\vv_h^k))|}{\|\nabla\tilde\vv_h^{k}\|_{\tilde\Omega_j^{k}}}\\
&&\qquad\quad+\sup\limits_{\tilde q_h^{k}\in
{ M_E^h(\tilde\Omega_j^{k})/R}}\frac{|d_{\Omega_j^k}(I_h(\phi q_h^k),\hat
u_H^j)|}{\|\tilde q_h^{k}\|_{\tilde\Omega_j^{k}}}\\
&&\qquad\lesssim { \sup\limits_{{ \tilde\vv_h^{k}\in\X_E^h(\tilde\Omega_j^{k})}}}\frac{ \|\nabla I_h(\phi\vv_h^k)\|_{\Omega_j^k}\|\nabla\hat\u_H^j\|_{\Omega_j^k}+\|\hat p_H^j-\theta_{j,\hat p_H^j}^k\|_{\Omega_j^k}\|\nabla\cdot(I_h(\phi\vv_h^k))\|_{\Omega_j^k}}{\|\nabla\tilde\vv_h^{k}\|_{\tilde\Omega_j^{k}}}\\
&&\qquad\quad+{ \sup\limits_{\tilde q_h^{k}\in
		M_E^h(\tilde\Omega_j^{k})}}\frac{\|I_h\phi q_h^k\|_{\Omega_j^k}\|\nabla\hat\u_H^j\|_{\Omega_j^k}}{\|\tilde q_h^{k}\|_{\tilde\Omega_j^{k}}}\\
&&\qquad\lesssim { \sup\limits_{{ \tilde\vv_h^{k}\in\X_E^h(\tilde\Omega_j^{k})}}}\frac{ h^{-1}\|\vv_h^k\|_{\Omega_j^k}\|\nabla\hat\u_H^j\|_{\Omega_j^k}+h^{-1}\|\hat p_H^j-\theta_{j,\hat p_H^j}^k\|_{\Omega_j^k}\|\vv_h^k\|_{\Omega_j^k}}{\|\nabla\tilde\vv_h^{k}\|_{\tilde\Omega_j^{k}}}\\
&&\qquad\quad+{ \sup\limits_{\tilde q_h^{k}\in
		M_E^h(\tilde\Omega_j^{k})}}\frac{\|q_h^k\|_{\Omega_j^k}\|\nabla\hat\u_H^j\|_{\Omega_j^k}}{\|\tilde q_h^{k}\|_{\tilde\Omega_j^{k}}},
\end{eqnarray*}
and
\begin{eqnarray*}
I_2&&\qquad\lesssim\sup\limits_{\tilde\vv_h^{k}\in\X_0(\Omega \backslash D_j)|_{\tilde\Omega_j^{k}}}\frac{|a_{\Omega_j^k}(\hat\u_H^j,\vv_h^k)+d_{\Omega_j^k}(\hat p_H^j-\theta_{j,\hat p_H^j}^k,\vv_h^k)|}{\|\nabla\tilde\vv_h^{k}\|_{\tilde\Omega_j^{k}}}
+\sup\limits_{\tilde q_h^{k}\in
M^h(\tilde\Omega_j^{k})}\frac{|d_{\Omega_j^k}(q_h^k,\hat
u_H^j)|}{\|\tilde q_h^{k}\|_{\tilde\Omega_j^{k}}}\\
&&\qquad\lesssim \frac{ h^{-1}\|\vv_h^k\|_{\Omega_j^k}\|\nabla\hat\u_H^j\|_{\Omega_j^k}+h^{-1}\|\hat p_H^j-\theta_{j,\hat p_H^j}^k\|_{\Omega_j^k}\|\vv_h^k\|_{\Omega_j^k}}{\|\nabla\tilde\vv_h^{k}\|_{\tilde\Omega_j^{k}}}
+\frac{\|q_h^k\|_{\Omega_j^k}\|\nabla\hat\u_H^j\|_{\Omega_j^k}}{\|\tilde q_h^{k}\|_{\tilde\Omega_j^{k}}}.
\end{eqnarray*}


{ We can choose ${\cal D}=\Omega\backslash\tilde\Omega_j^{k-1},\ {\cal D}\backslash{\cal D'}=\Omega_j^k,\ \Omega\backslash{\cal
D}'=\tilde\Omega_j^{k}$ in Lemma \ref{COR10241503}. Noting that $\mbox{diam}({\cal D})\approxeq H$,
$\mbox{dist}(\partial{\cal D},\partial{\cal D}')\approxeq h$ by the definitions of $\tilde\Omega_j^{k-1}$ and $\Omega_j^k$,} then
we can derive by virtue of  ({\bf A}2) and Lemma \ref{COR10241503}:
\begin{eqnarray*}
&&  h^{-1}\|\vv_h^k\|_{\Omega_j^k}\|\nabla\hat\u_H^j\|_{\Omega_j^k}\lesssim h^{-\frac12} H^\frac{d-1}{2}\beta_d^\frac12(H)\|\nabla\hat\u_H^j\|_{\Omega_j^k}\|\nabla\tilde\vv_h^{k}\|_{\tilde\Omega_j^{k}},\\
&& h^{-1}\|\hat p_H^j-\theta_{j,\hat p_H^j}^k\|_{\Omega_j^k}\|\vv_h^k\|_{\Omega_j^k}\lesssim h^{-\frac12}H^\frac{d-1}{2}\beta_d^\frac12(H)\|\hat
p_H^j-\theta_{j,\hat
p_H^j}^k\|_{\Omega_j^k}\|\nabla\tilde\vv_h^{k}\|_{\tilde\Omega_j^{k}}.
\end{eqnarray*}
{From $q_h\in M_0^h(\tilde\Omega_j^k)$, Lemma
\ref{COR10241503} and the inverse inequality, we also have}
{\begin{eqnarray*}
&& \|q_h^k\|_{\Omega_j^k}\|\nabla\hat\u_H^j\|_{\Omega_j^k}\lesssim h^\frac12H^\frac{d-1}{2}\beta_d^\frac12(H)\|\nabla\tilde q_h^{k}\|_{\tilde\Omega_j^{k}}\|\nabla\hat\u_H^j\|_{\Omega_j^k}\lesssim
h^{-\frac12}H^\frac{d-1}{2}\beta_d^\frac12(H)\|\tilde
q_h^{k}\|_{\tilde\Omega_j^{k}}\|\nabla\hat\u_H^j\|_{\Omega_j^k}.
\end{eqnarray*}}
Combining the estimates above leads to
\begin{eqnarray*}
&&\|\nabla\hat\u_H^j\|_{\tilde\Omega_j^{k}}+\inf\limits_{\theta\in R}\|\hat p_H^j-\theta\|_{\tilde\Omega_j^{k}}=\|\nabla\hat\u_H^j\|_{\tilde\Omega_j^{k}}+\|\hat p_H^j-\tilde\theta_{j,\hat p_H^j}^{k}\|_{\tilde\Omega_j^{k}}\\
&&\qquad\lesssim
h^{-\frac12}H^\frac{d-1}{2}\beta_d^\frac12(H)(\|\nabla\hat\u_H^j\|_{\Omega_j^k}+\|\hat
p_H^j-\theta_{j,\hat p_H^j}^k\|_{\Omega_j^k}).
\end{eqnarray*}

  Therefore we have
$$
\|\nabla\hat\u_H^j\|_{\tilde\Omega_j^{k}}^2+\|\hat
p_H^j-\tilde\theta_{j,\hat
p_H^j}^{k}\|_{\tilde\Omega_j^{k}}^2\lesssim
h^{-1}H^{d-1}\beta_d(H)(\|\nabla\hat\u_H^j\|_{\Omega_j^k}^2+\|\hat
p_H^j-\theta_{j,\hat p_H^j}^k\|_{\Omega_j^k}^2).
$$
{  It is obvious that
$$
\|\nabla\hat\u_H^j\|_{\tilde\Omega_j^{k-1}}^2+\|\hat
p_H^j-\tilde\theta_{j,\hat
	p_H^j}^{k-1}\|_{\tilde\Omega_j^{k-1}}^2\lesssim
h^{-1}H^{d-1}\beta_d(H)(\|\nabla\hat\u_H^j\|_{\Omega_j^k}^2+\|\hat
p_H^j-\theta_{j,\hat p_H^j}^k\|_{\Omega_j^k}^2),
$$	
}
or equivalently, there exists a mesh independent constant
$c>0$ so that
\begin{eqnarray}\label{L44-3}
\|\nabla\hat\u_H^j\|_{\Omega_j^k}^2+\|\hat p_H^j-\theta_{j,\hat
p_H^j}^k\|_{\Omega_j^k}^2\geq
{ chH^{1-d}\beta_d^{-1}(H)(\|\nabla\hat\u_H^j\|_{\tilde\Omega_j^{k-1}}^2+\|\hat
p_H^j-\tilde\theta_{j,\hat p_H^j}^{k-1}\|_{\tilde\Omega_j^{k-1}}^2)}.
\end{eqnarray}
{  \emph{Step 4.}}
By using the above inequality successively, we can deduce that
\begin{eqnarray*}
&&\|\nabla\hat\u_H^j\|_{\tilde\Omega_j^{\cal M}}^2+\|\hat p_H^j\|_{\tilde\Omega_j^{\cal M}}^2\\
&&\qquad\geq(\|\nabla\hat\u_H^j\|_{\tilde\Omega_j^{{\cal M}-1}}^2+\|\hat p_H^j-\tilde\theta_{j,\hat p_H^j}^{{\cal M}-1}\|_{\tilde\Omega_j^{{\cal M}-1}}^2)+(\|\nabla\hat\u_H^j\|_{\Omega_j^{\cal M}}^2+\|\hat p_H^j-\theta_{j,\hat p_H^j}^{\cal M}\|_{\Omega_j^{\cal M}}^2)\\
&&\qquad\geq (1+chH^{1-d}\beta_d^{-1}(H))(\|\nabla\hat\u_H^j\|_{\tilde\Omega_j^{{\cal M}-1}}^2+\|\hat p_H^j-\tilde\theta_{j,\hat p_H^j}^{{\cal M}-1}\|_{\tilde\Omega_j^{{\cal M}-1}}^2)\geq\cdots\\
&&\qquad\geq(1+chH^{1-d}\beta_d^{-1}(H))^{\cal M}(\|\nabla\hat\u_H^j\|_{\tilde\Omega_j^0}^2+\|\hat p_H^j-\tilde\theta_{j,\hat p_H^j}^0\|_{\tilde\Omega_j^0}^2)\\
&&\qquad=(1+chH^{1-d}\beta_d^{-1}(H))^{\cal
M}(\|\nabla\hat\u_H^j\|_{\Omega\backslash\Omega_j}^2+\|\hat
p_H^j-\tilde\theta_{j,\hat p_H^j}^0\|_{\Omega\backslash\Omega_j}^2).
\end{eqnarray*}
Noting that
$$
\|\nabla\hat\u_H^j\|_{\tilde\Omega_j^{\cal M}}^2+\|\hat
p_H^j\|_{\tilde\Omega_j^{\cal
M}}^2=\|\nabla\hat\u_H^j\|_{\Omega\backslash D_j}^2+\|\hat
p_H^j\|^2_{\Omega\backslash D_j}\leq
\|\nabla\hat\u_H^j\|_\Omega^2+\|\hat p_H^j\|^2_\Omega,
$$
we can get
$$
\|\nabla\hat\u_H^j\|_{\Omega\backslash\Omega_j}^2+\|\hat
p_H^j-\tilde\theta_{j,\hat
p_H^j}^0\|_{\Omega\backslash\Omega_j}^2\leq
(1+chH^{1-d}\beta_d^{-1}(H))^{-{\cal
M}}(\|\nabla\hat\u_H^j\|_{\Omega}^2+\|\hat p_H^j\|^2_{\Omega}).
$$

Finally, due to (\ref{S1}), we can arrive at
$$
\|\nabla\e_{\u,h}^j\|_{\Omega_j}^2+\|e_{p,h}^j\|_{\Omega_j}^2\lesssim
(1+chH^{1-d}\beta_d^{-1}(H))^{-{\cal
M}}(\|\nabla\hat\u_H^j\|_{\Omega}^2+\|\hat p_H^j\|^2_{\Omega}).
$$
Since ${\cal M}\approxeq\frac{H}{h}$, for small $H$, simple
calculation leads to the result of this lemma.
\end{proof}

\begin{remark}
{  {For small $H$}, due to a basic inequality $\lambda/2\le\ln(1+\lambda)\le \lambda$ for any $0\le\lambda\le 1$, we have
  \begin{eqnarray*}
    (1+chH^{1-d}\beta_d^{-1}(H))^{-{\cal
M}} &= &\exp\left(-{\cal
M}\ln(1+chH^{1-d}\beta_d^{-1}(H))\right)\\
&\approxeq&\exp\left(-{\cal
M}(chH^{1-d}\beta_d^{-1}(H))\right)\\
&\approxeq& \exp\left(\alpha_d |\ln H|\right)\\
&=&H^{\alpha_d},
  \end{eqnarray*}
with
  \begin{eqnarray*}
  \alpha_d=\frac{c{\cal
M}hH^{1-d}\beta_d^{-1}(H)}{|\ln H|}=
\left\{\begin{array}{ll}
({\cal
M}\frac{h}{H})\frac{c}{|\ln H|^2}\approxeq\frac{c}{|\ln H|^2}, \quad & d=2,\\
({\cal
M}\frac{h}{H})\frac{c}{|\ln H|}\approxeq\frac{c}{|\ln H|}, & d=3.
\end{array}\right.
   \end{eqnarray*}
Since  $\beta_d(H)=H^{-1}$ when $d=3$ and $\beta_d(H)=|\ln H|$ when
$d=2$ (defined in Lemma \ref{LEH01}), we can easily check that $chH^{1-d}\beta_d^{-1}(H)\le 1$ holds when $h\le H/c$ for $d=3$ or $h\le H|\ln H|/c$ for $d=2$, which is a very trivial condition.}

\end{remark}

Similar with the  estimate of $\|\nabla
\e_{\u,h}^j\|_{\Omega}+\|e_{p,h}^j\|_\Omega$ in Lemma \ref{LE2}, we can derive { an}
 estimate for the { approximate "local" error of the coarse mesh Galerkin approximation}, say
$\|\nabla\hat\u_H^j\|_{\Omega}+\|\hat p_H^j\|_{\Omega}$ in the following.
\begin{lemma}\label{LE4} {\it Under the assumptions of A1, A2 and A3,   we have for $j=1,2,\cdots,N$
\begin{eqnarray}\label{L45-1}
\|\nabla\hat\u_H^j\|_{\Omega}+\|\hat p_H^j\|_\Omega\lesssim \|\nabla(\u-\u_H)\|_{D_j}+\|p-p_H\|_{D_j}.
\end{eqnarray}
}
\end{lemma}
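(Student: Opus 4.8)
The plan is to reduce the bound to the inf-sup stability of the fine pair $\X_0^h\times Q^h$ and then to estimate the resulting residual functional locally on $D_j=\mathrm{supp}\,\phi_j$. First I would rewrite the right-hand side of (\ref{errequh2}): since $[\u,p]$ solves (\ref{WStokes}) and $\nabla\cdot\u=0$, one has $(\f,\phi_j\vv_h)=B([\u,p],[\phi_j\vv_h,\phi_jq_h])$ for every $[\vv_h,q_h]\in\X_0^h\times Q^h$, so that (\ref{errequh2}) becomes
$$
B([\hat\u_H^j,\hat p_H^j],[\vv_h,q_h])=B([\u-\u_H,p-p_H],[\phi_j\vv_h,\phi_jq_h])=B([\hat\u,\hat p],[\phi_j\vv_h,\phi_jq_h]).
$$
Because $\X_0^h\times Q^h$ is a stable pair, $B$ satisfies the discrete inf-sup condition on it, so $\|\nabla\hat\u_H^j\|_\Omega+\|\hat p_H^j\|_\Omega$ is controlled by the supremum over $[\vv_h,q_h]$ of the above right-hand side divided by $\|\nabla\vv_h\|_\Omega+\|q_h\|_\Omega$. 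It then remains to prove that this numerator is $\lesssim(\|\nabla\hat\u\|_{D_j}+\|\hat p\|_{D_j})(\|\nabla\vv_h\|_\Omega+\|q_h\|_\Omega)$, all test-function contributions being confined to $D_j$.

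Next I would expand $B([\hat\u,\hat p],[\phi_j\vv_h,\phi_jq_h])$ via the Leibniz rule $\nabla(\phi_j\vv_h)=\phi_j\nabla\vv_h+\vv_h\otimes\nabla\phi_j$ and separate the terms in which $\phi_j$ is differentiated from those in which it is not. The undifferentiated terms, namely $\nu(\nabla\hat\u,\phi_j\nabla\vv_h)_{D_j}$, $-(\hat p,\phi_j\nabla\cdot\vv_h)_{D_j}$ and $(\phi_jq_h,\nabla\cdot\hat\u)_{D_j}$, are handled directly by Cauchy--Schwarz together with $0\le\phi_j\le1$ and $\|\nabla\cdot\hat\u\|_{D_j}\lesssim\|\nabla\hat\u\|_{D_j}$, yielding a bound of the required form with $D_j$-norms of $\vv_h$ and $q_h$.

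The delicate part is the pair of differentiated terms, which I would group as $\nu(\nabla\hat\u:(\vv_h\otimes\nabla\phi_j))_{D_j}-(\hat p,\vv_h\cdot\nabla\phi_j)_{D_j}=\int_{D_j}\vv_h\cdot\g$, where $\g=(g_i)_{i=1}^d$ with $g_i=\nu\nabla\hat u_i\cdot\nabla\phi_j-\hat p\,\partial_i\phi_j$. Since $\phi_j$ is a piecewise linear hat function with $\mathrm{diam}(D_j)\approxeq H$, one has $|\nabla\phi_j|\lesssim H^{-1}$, so a naive Cauchy--Schwarz would produce the spurious factor $H^{-1}\|\vv_h\|_{D_j}$ that must be avoided. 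The key observation is that, for $\e_i$ the $i$-th Cartesian unit vector, $\int_{D_j}g_i=a(\hat\u,\phi_j\e_i)+d(\hat p,\phi_j\e_i)=B([\hat\u,\hat p],[\phi_j\e_i,0])$; for an interior patch $\phi_j\e_i\in\X_0^H$, so the coarse Galerkin orthogonality $B([\hat\u,\hat p],[\vv_H,q_H])=0$ forces $\int_{D_j}\g=0$. Hence $\int_{D_j}\vv_h\cdot\g=\int_{D_j}(\vv_h-\bar\vv_h)\cdot\g$ with $\bar\vv_h$ the mean of $\vv_h$ on $D_j$, and a Poincaré inequality on $D_j$ gives $\|\vv_h-\bar\vv_h\|_{D_j}\lesssim H\|\nabla\vv_h\|_{D_j}$, while $\|\g\|_{D_j}\lesssim H^{-1}(\|\nabla\hat\u\|_{D_j}+\|\hat p\|_{D_j})$, so the factors $H$ and $H^{-1}$ cancel. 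For a patch meeting $\partial\Omega$ I would instead apply the Poincaré--Friedrichs inequality $\|\vv_h\|_{D_j}\lesssim H\|\nabla\vv_h\|_{D_j}$ directly, using $\vv_h=0$ on $\partial\Omega\cap\partial D_j$, so no orthogonality is needed.

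Collecting the two groups of terms gives $|B([\hat\u,\hat p],[\phi_j\vv_h,\phi_jq_h])|\lesssim(\|\nabla\hat\u\|_{D_j}+\|\hat p\|_{D_j})(\|\nabla\vv_h\|_\Omega+\|q_h\|_\Omega)$, and the inf-sup bound closes the argument. I expect the cancellation of the $H^{-1}$ in the differentiated terms to be the crux of the proof: it is precisely this zero-mean/Poincaré mechanism that replaces the superapproximation argument of \cite{XU1,HEXU1}, together with its $O(t^{-1})$ constant, by an estimate that is uniform in $H$.
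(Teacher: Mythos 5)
Your proof is correct, and although it shares the paper's skeleton---rewriting the right-hand side of (\ref{errequh2}) as $B([\u-\u_H,p-p_H],[\phi_j\vv_h,\phi_jq_h])$ and then invoking the discrete inf-sup stability of $\X_0^h\times Q^h$---the crucial step, neutralizing the factor $|\nabla\phi_j|\lesssim H^{-1}$, is carried out by a genuinely different mechanism. The paper applies coarse Galerkin orthogonality with the test function $I_H[\phi_j\vv_h]$, replacing $\phi_j\vv_h$ by $\hat I_H[\phi_j\vv_h]=(I-I_H)[\phi_j\vv_h]$, and then runs a superapproximation computation: splitting $\vv_h=I_H\vv_h+\hat I_H\vv_h$ and combining the interpolation estimate {\bf A1} (which gains a factor $H$) with the inverse inequality {\bf A2} and $|D\phi_j|\lesssim H^{-1}$ elementwise, to conclude $\|\nabla \hat I_H[\phi_j\vv_h]\|_{D_j}\lesssim\|\nabla\vv_h\|_{D_j}$. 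You use the same underlying orthogonality but with the far simpler coarse test functions $[\phi_j\e_i,0]$, extracting the zero-mean property $\int_{D_j}\g=0$ of the differentiated part of the residual, after which a scaled Poincar\'e inequality supplies the compensating factor $H$. Your route is more elementary, avoiding the interpolation/inverse machinery entirely, but it costs you (i) the mild extra hypothesis $\phi_j\e_i\in\X_0^H$, i.e.\ that componentwise $P_1$ hat functions belong to the coarse velocity space (automatic for Taylor--Hood or Mini elements and consistent with Remark \ref{r_puf_dom}, whose piecewise-linear $\phi_j$ the paper's proof also exploits); (ii) uniformity over patches of the Poincar\'e/Friedrichs constants, which follows from shape regularity; and (iii) the interior/boundary patch dichotomy, which the paper does not need since $I_H[\phi_j\vv_h]$ vanishes on $\partial\Omega$ automatically. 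One small correction to your closing remark: the paper's own proof of this lemma \emph{is} a superapproximation argument in the spirit of \cite{XU1,HEXU1}; it escapes the $O(t^{-1})$ degeneracy not by abandoning superapproximation but because the scale of variation of $\phi_j$ is exactly $H$, matched to the coarse mesh on which $I_H$ acts---your zero-mean/Poincar\'e device is an alternative way of exploiting that same matching.
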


\begin{proof}
First  we rewrite the problem (\ref{errequh2}) as
\begin{eqnarray*}
&&B([\hat\u_H^j,\hat p_H^j],[\vv_h,q_h])=(\f,\phi_j\vv_h)-a(\u_H,\phi_j\vv_h)\\
&&\qquad-d(p_H,\phi_j\vv_h)+d(\phi_jq_h,\u_H),\quad\forall
[\vv_h,q_h]\in\X_0^h\times Q_h,
\end{eqnarray*}
or equivalently,
$$
B([\hat\u_H^j,\hat
p_H^j],[\vv_h,q_h])=a(\u-\u_H,\phi_j\vv_h)+d(p-p_H,\phi_j\vv_h)-d(\phi_jq_h,\u-\u_H).
$$
Since $\X_0^h\times Q^h$ is a stable pair of the finite element spaces, we have
\begin{eqnarray*}
    &&\|\nabla\hat\u_H^j\|_\Omega+\|\hat p_H^j\|_\Omega\lesssim \sup\limits_{[\vv_h,q_h]\in\X_0^h\times M_h}\frac{B([\hat\u_H^j,\hat p_H^j],[\vv_h,q_h])}{\|\nabla\vv_h\|_\Omega+\|q_h\|_\Omega}\\
    &&\qquad=\sup\limits_{[\vv_h,q_h]\in\X_0^h\times M_h}\frac{a(\u-\u_H,\phi_j\vv_h)+d(p-p_H,\phi_j\vv_h)-d(\phi_jq_h,\u-\u_H)}{\|\nabla \vv_h\|_\Omega+\|q_h\|_\Omega}\\
    &&\qquad\lesssim \sup\limits_{\vv_h\in\X_0^h}\frac{a(\u-\u_H,\phi_j\vv_h)}{\|\nabla \vv_h\|_\Omega}+\sup\limits_{\vv_h\in\X_0^h}\frac{d(p-p_H,\phi_j\vv_h)}{\|\nabla \vv_h\|_\Omega}
    +\sup\limits_{q_h\in M^h}\frac{d(\phi_jq_h,\u-\u_H)}{\|q_h\|_\Omega}.
\end{eqnarray*}
 Note that {   $I_H$ is the Lagrange finite element interpolation operator of $\X$ onto $\X^H$ and $\hat I_H=I-I_H$, especially, $\mbox{supp }\phi_j= D_j$, and  $a(\u-\u_H,\phi_j\vv_h)+d(p-p_H,\phi_j\vv_h)=a(\u-\u_H,\hat I_H[\phi_j\vv_h])+d(p-p_H,\hat I_H\phi_j\vv_h)$,} it's easy to derive
\begin{eqnarray*}
    &&a(\u-\u_H,\hat I_H[\phi_j\vv_h])=a(\u-\u_H,\hat I_H[\phi_jI_H\vv_h]+\hat I_H[\phi_j\hat I_H\vv_h])\\
    &&\qquad\lesssim \|\nabla(\u-\u_H)\|_{D_j}(\|\nabla[\hat I_H(\phi_jI_H\vv_h)]\|_{D_j}+\|\nabla \hat I_H[\phi_j\hat I_H\vv_h]\|_{D_j}),\\
    && d(p-p_H,\hat I_H\phi_j\vv_h) =d(p-p_H,\hat I_H[\phi_j I_H\vv_h]+\hat I_H[\phi_j\hat I_H\vv_h])\\
    &&\qquad\lesssim \|p-p_H\|_{D_j}(\|\nabla[\hat I_H(\phi_jI_H\vv_h)]\|_{D_j}+\|\nabla \hat I_H[\phi_j\hat I_H\vv_h]\|_{D_j}),\\
    &&d(\phi_jq_h,\u-\u_H)\lesssim \|q_h\|_{D_j}\|\nabla(\u-\u_H)\|_{D_j}.
\end{eqnarray*}
In view of  {\bf A1}, {\bf A2} and noting that $\phi_j$ is a linear
function on each mesh of  $\tau_\Omega^H$ with $|D\phi_j|\lesssim H^{-1}$,
\begin{eqnarray*}
&&\|\nabla[\hat I_H(\phi_jI_H\vv_h)]\|_{D_j}=(\sum\limits_{\tau_\Omega^H\subset D_j}\|\nabla\hat I_H(\phi_jI_H\vv_h)\|_{\tau_\Omega^H}^2)^\frac12\leq H(\sum\limits_{\tau_\Omega^H\subset D_j}\|D^2(\phi_jI_H\vv_h)\|_{\tau_\Omega^H}^2)^\frac12\\
&&\qquad\lesssim H(\sum\limits_{\tau_\Omega^H\subset D_j}[\|\phi_jD^2(I_H\vv_h)\|_{\tau_\Omega^H}^2+\|D\phi_jD(I_H\vv_h)\|_{\tau_\Omega^H}^2])^\frac12\\
&&\qquad\lesssim H(\sum\limits_{\tau_\Omega^H\subset D_j}[H^{-2}\|D(I_H\vv_h)\|_{\tau_\Omega^H}^2+H^{-2}\|D(I_H\vv_h)\|_{\tau_\Omega^H}^2])^\frac12\lesssim \|\nabla\vv_h\|_{D_j},\\
&&\|\nabla \hat I_H[\phi_j\hat I_H\vv_h]\|_{D_j}\lesssim\|D(\phi_j\hat I_H\vv_h)\|_{D_j}
\lesssim \|D\phi_j\hat I_H\vv_h\|_{D_j}+\|\phi_jD(\hat
I_H\vv_h)\|_{D_j}\lesssim \|\nabla \vv_h\|_{D_j}.
\end{eqnarray*}
Combining these estimates immediately yield
$$
\|\nabla\hat\u_H^j\|_{\Omega}+\|\hat p_H^j\|_\Omega\lesssim
\|\nabla(\u-\u_H)\|_{D_j}+\|p-p_H\|_{D_j}.
$$
The proof is complete.
\end{proof}

Now we can state the main theorem concerning the approximate accuracy of solutions derived by the scheme
(\ref{alerrequ})-(\ref{final}) as follows.
\begin{theorem}\label{theorem1} {\it Assume that  A1, A2, A3 and (\ref{ASGM}) hold and $[\u,p]\in\H^{r+1}(\Omega)\times\H^r(\Omega)$. Then we have
{

\begin{eqnarray}
\|\nabla(\u_h-\u_H^h)\|_{\Omega}+\|p_h-p_H^h\|_\Omega&&\lesssim
H^{\alpha_d}(\|\nabla(\u_h-\u_H)\|_{\Omega}+\|p_h-p_H\|_\Omega),\label{T46-11}\\
\|\u_h-\u_H^h\|_{\Omega}&&\lesssim
H(\|\nabla(\u_h-\u_H^h)\|_{\Omega}+\|p_h-p_H^h\|_\Omega),\label{T46-12}\\
\|\nabla(\u-\u_H^h)\|_{\Omega}+\|p-p_H^h\|_\Omega&&\lesssim
h^r+H^{\alpha_d}(\|\nabla(\u-\u_H)\|_{\Omega}+\|p-p_H\|_\Omega),\label{T46-21}\\
\|\u-\u_H^h\|_{\Omega}&&\lesssim
h^{r+1}+H(\|\nabla(\u-\u_H^h)\|_{\Omega}+\|p-p_H^h\|_\Omega),\label{T46-22}
\end{eqnarray}
}
where $\alpha_d>0$ is defined in Lemma \ref{LE2}. }
\end{theorem}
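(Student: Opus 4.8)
The plan is to reduce all four estimates to the two fully discrete quantities $\e_{\u,h}=\u_h-\u_{H,h}$ and $e_{p,h}=p_h-p_{H,h}$ and then feed them into the local estimates already proved. First I would record, from (\ref{final}), that the corrected errors are $\u_h-\u_H^h=\e_{\u,h}-\E_\u^H$ and $p_h-p_H^h=e_{p,h}-E_p^H$. Testing the fine Galerkin equation for $[\u_h,p_h]$ against coarse functions and comparing with (\ref{post2}) shows that $[\E_\u^H,E_p^H]$ is precisely the coarse Galerkin approximation of $[\e_{\u,h},e_{p,h}]$, whence
$$B([\u_h-\u_H^h,\,p_h-p_H^h],[\vv_H,q_H])=0\qquad\forall[\vv_H,q_H]\in\X_0^H\times Q^H.$$
This orthogonality is the engine of the $L^2$ estimate; for the energy estimate (\ref{T46-11}) I would only use stability of the coarse projection, $\|\nabla\E_\u^H\|_\Omega+\|E_p^H\|_\Omega\lesssim\|\nabla\e_{\u,h}\|_\Omega+\|e_{p,h}\|_\Omega$, so that by the triangle inequality everything reduces to estimating $\|\nabla\e_{\u,h}\|_\Omega+\|e_{p,h}\|_\Omega$.

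For this core estimate I would invoke the inf--sup stability of the pair $\X_0^h\times Q_H^h$ to write $\|\nabla\e_{\u,h}\|_\Omega+\|e_{p,h}\|_\Omega$ as a supremum of $B([\e_{\u,h},e_{p,h}],[\vv_h,q_h])$. The divergence term $d(q_h,\e_{\u,h})$ drops out: both $\hat\u_H$ and $\hat\u_{H,h}$ realize the same discrete incompressibility relative to $\u_H$, and the homogeneous velocity data on each patch upgrade this to $Q_H^h$, so $\e_{\u,h}$ is discretely divergence free against $Q_H^h$. The surviving momentum part is rewritten through the error identity (\ref{errorequ}) as $-\sum_j\langle\xii^j,\vv_h\rangle_{\Gamma_j}$, which Lemma \ref{LE0} bounds by $\kappa^{\frac12}(\sum_j\|\xii^j\|^2_{\H_h^{-\frac12}(\Gamma_j)})^{\frac12}\|\nabla\vv_h\|_\Omega$. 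The first bound of Lemma \ref{LE0} replaces each $\|\xii^j\|$ by $\|\nabla\e_{\u,h}^j\|_\Omega+\|e_{p,h}^j\|_\Omega$, Lemma \ref{LE2} upgrades this to $H^{\alpha_d}(\|\nabla\hat\u_H^j\|_\Omega+\|\hat p_H^j\|_\Omega)$, and finally Lemma \ref{LE4} together with the finite-overlap inequality $\sum_j\|\cdot\|_{D_j}^2\lesssim\kappa\|\cdot\|_\Omega^2$ collapses the $\ell^2$-sum over $j$ into a single global norm. This yields the master estimate $\|\nabla\e_{\u,h}\|_\Omega+\|e_{p,h}\|_\Omega\lesssim H^{\alpha_d}(\|\nabla(\u-\u_H)\|_\Omega+\|p-p_H\|_\Omega)$, hence also the same bound for $[\u_h-\u_H^h,p_h-p_H^h]$.

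From the master estimate the four statements assemble by triangle inequalities. Statement (\ref{T46-21}) is immediate by inserting the fine solution on the left and using the classical fine-grid rate $\|\nabla(\u-\u_h)\|_\Omega+\|p-p_h\|_\Omega\lesssim h^r$ (the $h$-analogue of (\ref{ASGM})), with $H^{\alpha_d}\le1$; statement (\ref{T46-11}) is its fully discrete counterpart, obtained by passing from $\u-\u_H$ to $\u_h-\u_H$ on the right (or by rerunning the chain with $\u_h$ in place of $\u$), the fine error being of higher order. For the $L^2$ bound (\ref{T46-12}) I would run an Aubin--Nitsche duality on $[\u_h-\u_H^h,p_h-p_H^h]$: introduce the dual Stokes problem with datum $\u_h-\u_H^h$, so that A3 gives $\|\Phi\|_{2,\Omega}+\|\psi\|_{1,\Omega}\lesssim\|\u_h-\u_H^h\|_\Omega$; write $\|\u_h-\u_H^h\|_\Omega^2$ as the corresponding $B$-pairing, subtract the coarse interpolants of $[\Phi,\psi]$ (admissible by the orthogonality of the first paragraph), and use $\|\nabla(\Phi-\vv_H)\|_\Omega+\|\psi-q_H\|_\Omega\lesssim H(\|\Phi\|_{2,\Omega}+\|\psi\|_{1,\Omega})$ to gain one power of $H$. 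Finally (\ref{T46-22}) follows from (\ref{T46-12}) and the fine $L^2$ rate $\|\u-\u_h\|_\Omega\lesssim h^{r+1}$.

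I expect the genuine obstacle to lie entirely in the second paragraph, specifically in the collapse of the $\ell^2$-sum $\sum_j(\|\nabla\hat\u_H^j\|_\Omega+\|\hat p_H^j\|_\Omega)^2$ back to the global coarse error: it is here that the overlap constant $\kappa$, the localization of Lemma \ref{LE4}, and (hidden inside Lemma \ref{LE2}) the exponential decay through the annuli must be combined consistently, and where the nonconforming, only piecewise-continuous pressure space $Q_H^h$ forces one to justify carefully both the inf--sup step and the vanishing of the divergence term. The duality argument carries a milder version of the same nonconformity issue, but since $\Phi\in\H^2(\Omega)$ the term $d(p_h-p_H^h,\Phi-\vv_H)$ remains well defined and is handled in the usual way.
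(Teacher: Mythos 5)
Your proposal follows, in all essential steps, the same route as the paper's own proof: the coarse-grid orthogonality, the reduction of the energy bound to the multiplier sum, the chain Lemma \ref{LE0} $\rightarrow$ Lemma \ref{LE2} $\rightarrow$ Lemma \ref{LE4} followed by the finite-overlap $\ell^2$ collapse, and the Aubin--Nitsche duality with the coarse Stokes projection for (\ref{T46-12}) are exactly the ingredients the paper uses. Your derivation of the orthogonality $B([\u_h-\u_H^h,p_h-p_H^h],[\vv_H,q_H])=0$ by comparing (\ref{post2}) with the fine Galerkin equation tested on coarse pairs is in fact a cleaner path than the paper's detour through the fictitious-domain identity (\ref{errorequation}) and the projection $P_H$, and the rest of your assembly (pressure bound via the velocity-only inf-sup of $\X_0^h\times Q_H^h$, triangle inequalities and the fine-mesh rates for (\ref{T46-21}) and (\ref{T46-22})) is consistent with the paper.

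The genuine gap is the claim that the divergence term drops out, i.e.\ that $d(q_h,\e_{\u,h})=0$ for all $q_h\in Q_H^h$. What the equations actually yield is much weaker: from (\ref{errequh2}) and (\ref{alerrequ}) one gets $d(q_h,\hat\u_H^j)=-d(\phi_j q_h,\u_H)=d(q_h,\hat\u_{H,h}^j)$ only for \emph{continuous} test pressures $q_h\in Q^h$; the homogeneous boundary values of $\hat\u^j_{H,h}$ on $\partial\Omega_j$ allow one to add a constant per patch (divergence theorem), but nothing more. A function of $Q_H^h$ may jump across every coarse interelement boundary, and neither $\u_h$ (discretely divergence free only against $Q^h$) nor $\hat\u_H^j$ (whose defining equation (\ref{errequh2}) cannot be tested with broken pressures at all) satisfies any constraint against such test functions, so ``discretely divergence free against $Q_H^h$'' is false in general. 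Consequently, in your energy step the term $d(e_{p,h},\e_{\u,h})$ survives --- equivalently $-d(\hat p_{H,h},\e_{\u,h})$, since the remaining part of $e_{p,h}$ lies in $Q^h$ --- and the crude bound $|d(e_{p,h},\e_{\u,h})|\le \sqrt{d}\,\|e_{p,h}\|\,\|\nabla\e_{\u,h}\|$ cannot be absorbed: combining it with the pressure inf-sup bound produces a factor $\sqrt{d}/\beta>1$ in front of $\|\nabla\e_{\u,h}\|$ and the argument collapses. To be fair, this is exactly the point where the paper's own proof is silent: its error equation (\ref{errorequation}) is likewise stated only for $q_h\in Q^h$, and the step ``we can easily get'' hides the same difficulty; your closing paragraph correctly identifies this as the crux, but flagging the difficulty and then asserting the vanishing without proof does not close it.
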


\proof {
We know from (\ref{post2}) that
$$
\|\nabla\E_\u^H\|_\Omega+\|E_p^H\|_\Omega\lesssim
\|\nabla(\u_h-\u_{H,h})\|_\Omega+\|p_h-p_{H,h}\|_\Omega.
$$
Since
$$
[\u-\u_H^h,p-p_H^h]=[\u-\u_h,p-p_h]+[\u_h-\u_{H,h},p_h-p_{H,h}]+[\E_\u^H,E_p^H],
$$
and
$$
\|\nabla(\u-\u_h)\|_\Omega+\|p-p_h\|_\Omega\lesssim h^r,
$$
the conclusions (\ref{T46-11}) and (\ref{T46-21}) follows immediately due to Lemma \ref{LE2}
and Lemma \ref{LE4} and the triangle inequality.}

To estimate the $L^2$-norms of the velocity,  we firstly introduce a Stokes projection
$P_H=[P_H^\u,P_H^p]$ defined from $\X_0\times Q$ onto $\X_0^H\times Q^H$:
for given $[\w,r]\in\X_0\times Q$, find $[P_H^\u\w,P_H^p
r]\in\X_0^H\times Q^H$ such that
$$
B([\vv_H,q_H],[\w-P_H^\u\w,r-P_H^p r])=0,\quad\forall
[\vv_H,q_H]\in\X_0^H\times Q^H.
$$

In view of the definitions of $\hat\u_{H,h}$ and $\hat p_{H,h}$, and
$\sum\limits_{j=1}^N\phi_j=1$ in $\Omega$, adding up all the
equations of (\ref{fictitious}) over $j$ with $\mu=0$ gives
\begin{eqnarray}\label{1stcorrectionequation}
&&B([\hat\u_{H,h},\hat p_{H,h}],[\vv_h,q_h])=(\f,\vv_h)-B([\u_H,p_H],[\vv_h,q_h])\\
&&\nonumber\qquad-\sum\limits_{j=1}^N\int_{\Gamma_j}\xii^j\vv_hds,\quad\forall[\vv_h,q_h]\in
\X_0^h\times Q^h.
\end{eqnarray}
Since $[\u_{H,h},p_{H,h}]=[\u_H+\hat\u_{H,h},p_H+\hat
p_{H,h}]$, then we have
$$
B([\u_{H,h},p_{H,h}],[\vv_h,q_h])=(\f,\vv_h)-\sum\limits_{j=1}^N\int_{\Gamma_j}\xii^j\vv_hds,\quad\forall[\vv_h,q_h]\in
\X_0^h\times Q^h.
$$
On the another aspect, by the definition of the Stokes projection
$P_H$ above, we can rewrite  (\ref{post2}) associated with the coarse mesh
correction as
\begin{eqnarray*}
&&B([\E_\u^H,E_p^H],[\vv_h,q_h])=(\f,P_H^\u\vv_h)-B([\u_{H,h},p_{H,h}],[P_H^\u\vv_h,P_H^p
q_h]).
\end{eqnarray*}
Combining these two equations leads to
\begin{eqnarray*}
&&B([\u_H^h,p_H^h],[\vv_h,q_h])=(\f,\vv_h)+(\f,P_H^\u\vv_h)-B([\u_{H,h},p_{H,h}],[P_H^\u\vv_h,P_H^p q_h])\\
&&\qquad\quad-\sum\limits_{j=1}^N\int_{\Gamma_j}\xii^j\vv_hds,\quad\forall[\vv_h,q_h]\in\X_0^h\times
Q^h.
\end{eqnarray*}

Finally, we obtain $\forall[\vv_h,q_h]\in\X_0^h\times Q^h$
$$
B([\u_H^h,p_H^h],[\vv_h,q_h])=(\f,\vv_h)-\sum\limits_{j=1}^N\int_{\Gamma_j}\xii^j(I-P_H^\u)\vv_hds,
$$
and $\forall[\vv_h,q_h]\in\X_0^h\times Q^h$
\begin{equation}\label{errorequation}
B([\u_h-\u_H^h,p_h-p_H^h],[\vv_h,q_h])=\sum\limits_{j=1}^N\int_{\Gamma_j}\xii^j(I-P_H^\u)\vv_hds.
\end{equation}
From this error equation about $[\u_h-\u_H^h,p_h-p_H^h]$, and by virtue of
Lemma \ref{LE0}, \ref{LE2} and \ref{LE4}, we can easily get
$$
\|\nabla(\u_h-\u_H^h)\|_\Omega+\|p_h-p_H^h\|_\Omega\lesssim
H^{\alpha_d}(\|\nabla(\u-\u_H)\|_\Omega+\|p-p_H\|_\Omega).
$$
Then we can derive the first result by using the triangle
inequality.

Next, we also use the
Aubin-Nitsche duality argument to show $L^2-$error estimate of the velocity. By {\bf A3}, for
$\u_h-\u_H^h\in\L^2(\Omega)$, there exists
$[\w,r]\in\H^2(\Omega)\times H^1(\Omega)$ safistying
$$
B([\vv,q],[\w,r])=(\u_h-\u_H^h,\vv),\quad\forall
[\vv,q]\in\X_0(\Omega)\times Q,
$$
and
$$
\|\w\|_{2,\Omega}+\|r\|_{1,\Omega}\lesssim \|\u_h-\u_H^h\|_{\Omega}.
$$
If taking $[\vv,q]=[\u_h-\u_H^h,p_h-p_H^h]$ here and in view of
(\ref{errorequation}), we have
\begin{eqnarray*}
&&\|\u_h-\u_H^h\|_{\Omega}^2=B([\u_h-\u_H^h,p_h-p_H^h],[\w,r])\\
&&\qquad=B([\u_h-\u_H^h,p_h-p_H^h],[\w-P_H^\u\w,r-P_H^pr]).
\end{eqnarray*}
Then we can proceeds the estimate as
\begin{eqnarray*}
&&\|\u_h-\u_H^h\|_\Omega^2\lesssim (\|\nabla(\u_h-\u_H^h)\|_\Omega+\|p_h-p_H^h\|_\Omega)\|\nabla(I-P_H^\u)\w\|_\Omega\\
&&\qquad\quad+\|\nabla(\u_h-\u_H^h)\|_\Omega\|(I-P_H^p)r\|_\Omega\\
&&\quad\lesssim H(\|\nabla(\u_h-\u_H^h)\|_\Omega+\|p_h-p_H^h\|_\Omega)(\|\w\|_{2,\Omega}+\|r\|_{1,\Omega})\\
&&\quad\lesssim
H(\|\nabla(\u_h-\u_H^h)\|_\Omega+\|p_h-p_H^h\|_\Omega)\|\u_h-\u_H^h\|_\Omega,
\end{eqnarray*}
which immediately yields
$$
\|\u_h-\u_H^h\|_\Omega\lesssim
H(\|\nabla(\u_h-\u_H^h)\|_\Omega+\|p_h-p_H^h\|_\Omega).
$$
By using the triangle inequality again, we prove
$L^2-$error estimate for the velocity.
\endproof


\begin{remark}
By the results in Theorem \ref{theorem1} above, especially (\ref{T46-11}) and (\ref{T46-12}), we can clearly improve the convergence orders by introducing a
two-grid iteration. Actually, we can verify that all the  lemmas and theorems above still hold when
replacing $[\u_H,p_H]$ by $[\u_H^h,p_H^h]$.
This suggests us to introducing the
following two-grid iterative scheme with a iteration number of K.
\begin{equation}\label{KKK}
K=[\alpha_d^{-1}+0.5]=\left\{\begin{array}{ll}
O(|\ln H|^2),\quad & d=2,\\
O(|\ln H|), & d=3.
\end{array}\right.
\end{equation}
\end{remark}

\noindent{\bf{Local and parallel two-grid iterative scheme:}}
\begin{description}
\item[Step 0.] Setting $k=0$;  Deriving $[\u_H,p_H]$ by solving (\ref{SGM}) and denoting by $[\u_H^{0,h},p_H^{0,h}]=[\u_H,p_H]$;
\item[Step 1.] For $k\geq 0$, solving the equations (\ref{alerrequ}) with $[\u_H,p_H]=[\u_H^{k,h},p_H^{k,h}]$ to get $\{[\hat\u_{H,h}^j,\hat p_{H,h}^j]\}_{j=1}^N$ for each $j$, which are
denoted as $\{[\hat\u_{H,h}^{k+1,j},\hat p_{H,h}^{k+1,j}]\}_{j=1}^N$. Then we construct $[\u_{H,h}^{k+1},p_{H,h}^{k+1}]$ by formula (\ref{post1});
\item[Step 2.] Deriving $[\E_\u^{k+1},E_p^{k+1}]$ by
solving (\ref{post2}) with $[\u_{H,h},p_{H,h}]=[\u_{H,h}^{k+1},p_{H,h}^{k+1}]$, and constructing
$$
[\u_H^{k+1,h},p_H^{k+1,h}]=[\u_{H,h}^{k+1}+\E_\u^{k+1},p_{H,h}^{k+1}+E_p^{k+1}].
$$
\item[Step 3.]Checking whether $k+1>K$, if yes, terminating the iteration and deriving $[\u_H^h,p_H^h]=[\u_H^{k+1,h},p_H^{k+1,h}]$; otherwise, letting $k:=k+1$ and turning to Step 1.
\end{description}

\begin{theorem}\label{Theorem2} {\it Suppose $[\u,p]\in\H^{r+1}(\Omega)\times H^r(\Omega)$, the final approximation $[\u_H^h,p_H^h]$ for the iterative scheme with $K$ defined by (\ref{KKK}) has the following error bounds}
\begin{eqnarray}
\label{ErrorH1} && \|\nabla(\u-\u_H^h)\|_{\Omega}+\|p-p_H^h\|_\Omega\lesssim h^r+H^{r+1},\\
\label{ErrorL2} && \|\u-\u_H^h\|_{\Omega}\lesssim h^{r+1}+H^{r+2}.
\end{eqnarray}
\end{theorem}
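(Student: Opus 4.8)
The plan is to turn the one-cycle estimates of Theorem \ref{theorem1} into a geometric iteration, but measured against the \emph{fine-mesh} Galerkin solution $[\u_h,p_h]$ rather than against $[\u,p]$, so that no $h^r$ floor contaminates the recursion. By the Remark following Theorem \ref{theorem1}, each cycle of the iterative scheme is nothing but the one-shot scheme with $[\u_H,p_H]$ replaced by the current iterate $[\u_H^{k,h},p_H^{k,h}]$, so that (\ref{T46-11}) and (\ref{T46-12}) hold verbatim at every step. Setting $\mathcal{E}_k=\|\nabla(\u_h-\u_H^{k,h})\|_\Omega+\|p_h-p_H^{k,h}\|_\Omega$, estimate (\ref{T46-11}) then gives the purely multiplicative recursion $\mathcal{E}_{k+1}\lesssim H^{\alpha_d}\mathcal{E}_k$; crucially there is no additive term, because both $\u_H^{k,h}$ and $\u_h$ belong to $\X_0^h$ and the scheme can in principle drive the iterate all the way to the fine-mesh solution.

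Iterating this bound $K$ times yields $\mathcal{E}_K\lesssim H^{\alpha_d K}\mathcal{E}_0$. The calibration $K=[\alpha_d^{-1}+0.5]$ in (\ref{KKK}) is designed precisely so that $\alpha_d K\to1$ as $H\to0$ (indeed $\alpha_d K\ge 1-\frac12\alpha_d$), whence $H^{\alpha_d K}\approxeq H$. For the seed I would use the triangle inequality together with the classical estimate (\ref{ASGM}) on both meshes, $\mathcal{E}_0=\|\nabla(\u_h-\u_H)\|_\Omega+\|p_h-p_H\|_\Omega\lesssim h^r+H^r\lesssim H^r$, to conclude $\mathcal{E}_K\lesssim H^{r+1}$. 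Splitting $\|\nabla(\u-\u_H^h)\|_\Omega+\|p-p_H^h\|_\Omega\le\big(\|\nabla(\u-\u_h)\|_\Omega+\|p-p_h\|_\Omega\big)+\mathcal{E}_K\lesssim h^r+H^{r+1}$ then delivers (\ref{ErrorH1}).

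For the $L^2$ velocity bound (\ref{ErrorL2}) I would apply the duality estimate (\ref{T46-12}) only at the final cycle, $\|\u_h-\u_H^h\|_\Omega\lesssim H\,\mathcal{E}_K\lesssim H^{r+2}$, and combine it with the fine-mesh $L^2$ rate $\|\u-\u_h\|_\Omega\lesssim h^{r+1}$ through the triangle inequality to reach $\|\u-\u_H^h\|_\Omega\lesssim h^{r+1}+H^{r+2}$.

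The delicate point, and the main obstacle, is the accumulation of constants across the $K$ cycles: the hidden constant in (\ref{T46-11}) enters once per step while $K$ is large ($O(|\ln H|^2)$ in 2-D, $O(|\ln H|)$ in 3-D), so a crude factor $(C H^{\alpha_d})^K$ with $C>1$ would swamp the gain $H^{\alpha_d K}\approxeq H$ --- all the more since $H^{\alpha_d}\to1$ as $H\to0$. Making the iteration rigorous therefore requires tracking the \emph{genuine} per-cycle reduction, namely the contraction $(1+chH^{1-d}\beta_d^{-1}(H))^{-{\cal M}}$ of Lemma \ref{LE2}, which is strictly below one and whose $K$-fold product telescopes to $\approxeq H$ exactly through the joint choice of the constant $c$ in $\alpha_d$ and of $K$ in (\ref{KKK}). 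Checking that the remaining $O(1)$ factors from Lemmas \ref{LE0} and \ref{LE4}, the finite-overlap constant $\kappa$, and the Stokes projection do not upset this balance is what fixes the admissible window for $H$ and $h$.
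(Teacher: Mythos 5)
Your proposal is correct and is essentially the paper's own proof: measure the iterates against the fine-mesh solution $[\u_h,p_h]$, iterate (\ref{T46-11}) $K$ times and apply (\ref{T46-12}) once at the final cycle, seed with $\|\nabla(\u_h-\u_H)\|_\Omega+\|p_h-p_H\|_\Omega\lesssim H^r$, use $K\alpha_d\approx 1$ from (\ref{KKK}) so that $H^{K\alpha_d}\approxeq H$, and finish with the triangle inequality and the fine-mesh rates $h^r$, $h^{r+1}$. Two remarks. First, the one step the paper carries out explicitly that you instead borrow from the Remark is the justification for reapplying Theorem \ref{theorem1} at every cycle: testing (\ref{errorequation}) with coarse pairs $[\vv_H,q_H]$ (for which $(I-P_H^\u)\vv_H=0$) gives $B([\u_h-\u_H^h,p_h-p_H^h],[\vv_H,q_H])=0$, i.e.\ each iterate satisfies the same coarse-grid Galerkin equation as $[\u_H,p_H]$; this is precisely what makes Lemma \ref{LE4} (whose proof inserts $\hat I_H$ via Galerkin orthogonality against the coarse space) applicable with $[\u_H,p_H]$ replaced by the iterate, so it deserves a line in a complete write-up rather than an appeal to the Remark. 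Second, the constant-accumulation obstacle you flag in your last paragraph is a genuine subtlety, but be aware that the paper's proof does not resolve it either: it iterates the $\lesssim$ of (\ref{T46-11}) $K$ times and keeps only $H^{K\alpha_d}$, silently discarding a factor $C^K$ which, since $K\approxeq\alpha_d^{-1}$ grows as $H\to 0$, is not bounded by a mesh-independent constant; on this point your account is more candid than the paper, though neither argument closes the issue.
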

\begin{proof}
{  Note that, the Ritz-Projection of $[\u_H^{h},p_H^{h}]$ onto the corresponding course grid finite element spaces is nothing but the standard Galerkin approximation,
see (\ref{errorequation}), when selecting $[\vv_h,q_h]=[\vv_H,q_H]$,  we have
\begin{eqnarray*}
B([\u_h-\u_H^h,p_h-p_H^h],[\vv_H,q_H])=0,
\end{eqnarray*}
then,
\begin{eqnarray*}
B([\u_H^h,p_H^h],[\vv_H,q_H])=B([\u_h,p_h],[\vv_H,q_H])=(\f,\vv_H)=B([\u_H,p_H],[\vv_H,q_H]).
\end{eqnarray*}
Then,  we can update $[\u_H^{0,h},p_H^{0,h}]$ by $[\u_H^{1,h},p_H^{1,h}]$,
and execute  the previous local and parallel two-grid iterative scheme (without Step 0) again to get $[\u_H^{2,h},p_H^{2,h}]$, then follow (\ref{T46-11}) and (\ref{T46-12}) to derive the error estimates of the form,  equivalently, there exists a mesh independent constant
$c>0$,
\begin{eqnarray*}
\|\nabla(\u_h-\u_H^{2,h})\|_{\Omega}+\|p_h-p_H^{2,h}\|_\Omega&& \lesssim
H^{\alpha_d}(\|\nabla(\u_h-\u_H^{1,h})\|_{\Omega}+\|p_h-p_H^{1,h}\|_\Omega)\\
&&\lesssim
H^{2\alpha_d}(\|\nabla(\u_h-\u_H^{0,h})\|_{\Omega}+\|p_h-p_H^{0,h}\|_\Omega),\\
\|\u_h-\u_H^{2,h}\|_{\Omega}&&\lesssim
H(\|\nabla(\u_h-\u_H^{2,h})\|_{\Omega}+\|p_h-p_H^{2,h}\|_\Omega).
\end{eqnarray*}
Therefore, cyclically executing our  two-grid iterative scheme with updating data by a iteration of $K$ times, we can finally derive
\begin{eqnarray*}
\|\nabla(\u_h-\u_H^{K,h})\|_{\Omega}+\|p_h-p_H^{K,h}\|_\Omega&& \lesssim
H^{K\alpha_d}(\|\nabla(\u_h-\u_H^{0,h})\|_{\Omega}+\|p_h-p_H^{0,h}\|_\Omega)\\
&&\lesssim H^{K\alpha_d}(\|\nabla(\u_h-\u_H)\|_{\Omega}+\|p_h-p_H\|_\Omega),\\
\|\u_h-\u_H^{K,h}\|_{\Omega}&&\lesssim
H(\|\nabla(\u_h-\u_H^{K,h})\|_{\Omega}+\|p_h-p_H^{K,h}\|_\Omega).
\end{eqnarray*}

By the choice of $K$ within (\ref{KKK}) and the triangle inequality, since $[\u_H^h,p_H^h]=[\u_H^{K,h},p_H^{K,h}]$ now, we finally get (\ref{ErrorH1}) and (\ref{ErrorL2}).
}

\end{proof}

\begin{remark}
By the estimates in Theorem  \ref{Theorem2}, we can clearly know that in order to get the optimal  convergence orders of the velocity - pressure pair in  the  sense of $\H^1-L^2$-norms, or that of  the velocity in $\L^2$-norm, we should choose $H$ and $h$ such that
\begin{equation}\label{CHh}
h\sim H^\frac{r+1}{r}\quad\mbox{or}\quad h\sim H^\frac{r+2}{r+1},
\end{equation}
respectively.
\end{remark}

\section{Numerical Tests}\label{Tests}

In all the numerical experiments below, the algorithms are implemented using
public domain finite element software Freefem++\cite{free}. All
simulations were performed on  a Dawning parallel cluster composed
of 64 nodes (each node consists of eight-core 2.0 GHz CPU,
 8$\times$2 GB DRAM, and all the nodes are connected via 20Gbps
InfiniBand). The message-passing interface is supported by MPICH.

We introduce the following
notations for convenience:

SFEM means   the standard finite element method.

EPLP  denotes the expandable local and parallel two-grid finite
element iterative scheme.

Wall time   covers the maximal CPU time among all processors used
for EPLP, including the CPU time for solving the two global coarse
grid problems and the parallel time for solving all the subproblems.

\subsection{Problem 1}

Firstly, to verify the theoretical  results, we consider the following
2D numerical example (referred as Problem 1) with exact solution
 \begin{eqnarray*}
 &&\mathbf u=(10x^2(x-1)^2y(y-1)(2y-1),-10x(x-1)(2x-1)y^2(y-1)^2),\\
&&p=10(2x-1)(2y-1).
 \end{eqnarray*}
 The domain is selected as the unit square
$\Omega=[0,1]\times [0,1]$ with a uniform triangulation $T^H$.  The Taylor-Hood finite element pairs are used  in solving the Stokes equation.
\begin{figure}[htbp]
\begin{center}
\includegraphics[width=50mm,height=40mm]{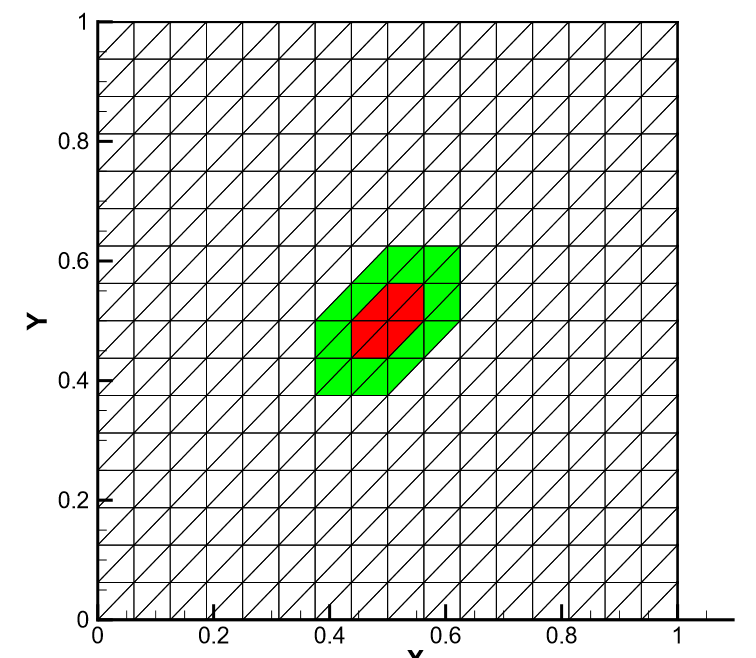}\\
\caption{  Structured mesh for Problem 1, $D_j$:  local domain (red) and $\Omega_j$:  expanded domain of $D_j$ with one mesh layer (both red and green). \label{figstruct}}
\end{center}
\end{figure}

According to   Theorem  \ref{Theorem2} for EPLP, we
know that the estimate has  the form of
\begin{eqnarray*}
&&\|\nabla(\mathbf{u}-{\mathbf{u}}_H^h)\|_{\Omega}
+\|p-{p}^h_H\|_{\Omega}=O(h^2+ H^3),
\end{eqnarray*}
 which suggests us to select $H$ and $h$ such that
$h\approxeq H^{\frac32}$ when verifying the optimal convergence orders for $H^1$-norm of velocity
and $L^2$-norm of pressure.

To this end, for deriving the approximate $H^1$-error of velocity and $L^2$-error of pressure,  we
implement EPLP in parallel  with coarse meshes of the
sizes H=1/16,\ 1/25,\ 1/36,\ 1/49,\ 1/64 and corresponding fine meshes with the
sizes $h=1/64,\ 1/125,\ 1/216,\ 1/343, \ 1/512$, under the parallel environment of fixed 64 processors. Meanwhile, SFEM is executed
for the same fine mesh on a single processor. The
results for both methods are presented in Table  \ref{H1LPPUM}, \ref{H1SFEM},
respectively.

\begin{table}[!h]
\tabcolsep 0pt \caption{The  errors of SFEM for Problem 1.
{\label{H1SFEM}}} \vspace*{-10pt}
\begin{center}
\def\temptablewidth{0.9\textwidth}
{\rule{\temptablewidth}{1pt}}
\begin{tabular*}{\temptablewidth}{@{\extracolsep{\fill}}cccccc }
\hline   $h$&$||\mathbf u-\mathbf u_{h}||_{1,\Omega}$ &Order & $||p-p_h||_{0,\Omega}$ &  Order   &CPU  \\
 \hline $1/64$&  0.000205741&        &  0.000630369
 &  &  2.718      \\
        $1/125$& 5.396e-05& 1.99927& 0.000165247&2.00000 &   68.49\\
             $1/216$ &- & - &-& - & - \\
              $1/343$ &- & - &-& - & - \\
               $1/512$ &- & - &-& - & - \\
  \hline
\end{tabular*}
\end{center}
 \end{table}

\begin{table}[!h]
\tabcolsep 0pt \caption{The errors of EPLP for Problem 1,   $h
\approxeq H^{\frac32}$, 64 processors. {\label{H1LPPUM}}}
\vspace*{-10pt}
\begin{center}
\def\temptablewidth{0.9\textwidth}
{\rule{\temptablewidth}{1pt}}
\begin{tabular*}{\temptablewidth}{@{\extracolsep{\fill}}ccccccccc }
         \hline  $H$& $h$&$K$&$||\mathbf u-{\mathbf u}_H^{h}||_{1,\Omega}$ &Order & $||p-{p}_H^h||_{0,\Omega}$ &  Order   & Wall time  \\
 \hline  $1/16$&$1/64$&2 &0.000299829&         & 0.00077599&  &  10.54\\
 $1/25$&$1/125$&3 &5.44032e-05&     2.54961    & 0.000165716&  2.30623&  91.51
\\
       $1/36$& $1/216$&3&1.81385e-05& 2.00815 &5.53793e-05
&  2.00391 &   465.08\\
       $1/49$& $1/343$&3&7.19776e-06& 1.99862 &2.19737e-05& 1.99882 &   2084.38\\
            $1/64$& $1/512$&4 &3.25815e-06&1.97859  &1.01485e-05&1.92844&  7892.11\\
  \hline
\end{tabular*}
\end{center}
 \end{table}

\begin{table}[!h]
\tabcolsep 0pt \caption{The $L^2$-error of Velocity by EPLP for
Problem 1, $h\approxeq H^{\frac43}$, 64 processors.
{\label{L2LPPUM}}} \vspace*{-10pt}
\begin{center}
\def\temptablewidth{0.9\textwidth}
{\rule{\temptablewidth}{1pt}}
\begin{tabular*}{\temptablewidth}{@{\extracolsep{\fill}}cccccccc }
         \hline  $H$& $h$&$K$&$||\mathbf u-{\mathbf u}_H^{h}||_{0,\Omega}$ &Order   & Wall time  \\
 \hline  $1/32$&$1/96$&3 &1.0881e-07  &          &  80.44
\\
 $1/64$&$1/256$&4 &5.71659e-09  &    3.00382  &  2032.04
\\
       $1/96$& $1/384$&4&1.74889e-09  & 2.92107   &  10435.2\\
  \hline
\end{tabular*}
\end{center}
 \end{table}

From Table  \ref{H1LPPUM}, the optimal orders for the $H^1$-error of velocity
and $L^2$-error of pressure  by EPLP are observed, which can verify the  theoretical result. Also
the approximate accuracy by EPLP are comparable
 with those of SFEM under same fine meshes in Table \ref{H1SFEM}.  Noting that, in the present
computational environment, SFEM fails for mesh with$h=1/216$ or finer, meanwhile our EPLP still works well  for much finer meshes including $h=1/512$.

Similarly, by  Theorem  \ref{Theorem2}, we choose mesh pairs with  a relation of $h\approxeq
H^{\frac43}$ to check the order for $L^2$-error of the velocity computed by our EPLP. The parameters and computational
results  are shown in Table \ref{L2LPPUM}, from which the optimal order of $O(h^3)$ is
verified.  This also supports the result in the theoretical analysis.

\begin{table}[!h]
\tabcolsep 0pt \caption{Wall time $T(J)$ in seconds, speedup $S_p$
and parallel efficiency $E_p$  for Example 1, with $H=1/36$.
{\label{P2019}}} \vspace*{-10pt}
\begin{center}
\def\temptablewidth{0.9\textwidth}
{\rule{\temptablewidth}{1pt}}
\begin{tabular*}{\temptablewidth}{@{\extracolsep{\fill}}ccccccc }
\hline   $J$  & 2 & 4 & 8 & 16 &32 &64  \\
 \hline
  $T(J)$        &  9204.77
&4684.47&2520.25&  1392.31&747.51&465.08\\
  $S_p=\frac{T(2)}{T(J)}$               &1.00   & 1.96 & 3.65 & 6.61&  12.31& 19.79 \\
  $E_p=\frac{2\times T(2)}{J\times T(J)}$&1.00     &0.98& 0.91 & 0.83&0.77&0.62\\
  \hline
\end{tabular*}
\end{center}
 \end{table}

The performance of a parallel algorithm in a homogeneous parallel
environment is usually measured by speedup and parallel efficiency,
commonly defined as
\begin{eqnarray*}
S_p=\frac{T(J_1)}{T(J_2)},\ E_p=\frac{J_1\times T(J_1)}{J_2\times
T(J_2)},
\end{eqnarray*}
where $T(J_1)$ and $T(J_2)$ ($J_1\le J_2$) are  wall times of the
parallel program when using $J_1$ and $J_2$ processors, respectively.

Table  \ref{P2019} reports the wall time of EPLP in a parallel
environment using processors of number $J=2,\ 4,\ 8,\ 16,\
32,\ 64$, and presents the corresponding speedup and parallel
efficiency, which are computed by comparison with $J_1=2$. These
results show  good parallel performance of our EPLP.

\subsection{Problem 2}

\begin{figure}[htbp]
\begin{center}
\includegraphics[width=50mm,height=40mm]{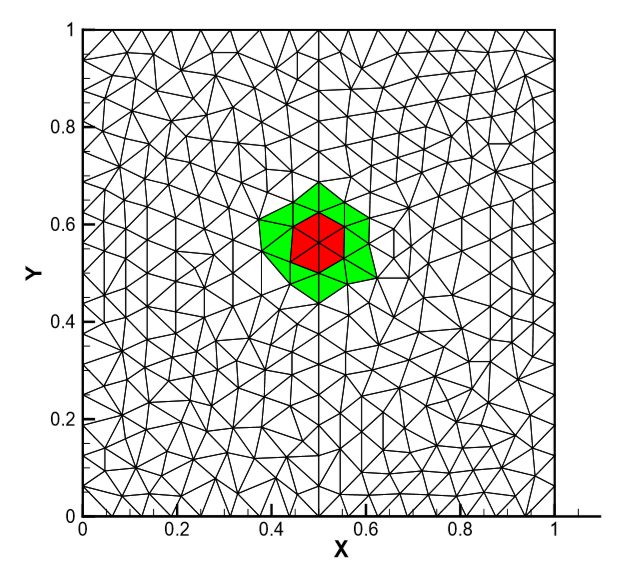}\\
 \caption{  A regular unstructured mesh for Problem 2, $D_j$: a  local domain (red) and $\Omega_j$: an expanded domain of $D_j$ (both red and green). \label{figunstruct}}
\end{center}
\end{figure}

\begin{table}[!h]
\tabcolsep 0pt \caption{The errors of EPLP for Problem 2  with
Taylor-Hood elements, $h\approxeq H^{\frac32}$, with 64
processors. {\label{E2P2P1}}} \vspace*{-10pt}
\begin{center}
\def\temptablewidth{0.9\textwidth}
{\rule{\temptablewidth}{1pt}}
\begin{tabular*}{\temptablewidth}{@{\extracolsep{\fill}}ccccccccc }
         \hline  $H$& $h$&$K$&$||\mathbf u-{\mathbf u}^{h}||_{1,\Omega}$ &Order & $||p-{p}^h||_{0,\Omega}$ &  Order   & Wall time  \\
 \hline  $1/16$&$1/64$&2 &0.00237151&         &9.76556e-05&  &  10.75
\\
 $1/32$&$1/160$&3 &0.000415357&     1.90130    & 1.51911e-05& 2.03073&  196.61
\\
       $1/48$& $1/288$&3&0.000137697&1.87837  &4.88635e-06
&   1.92972&    1137.06
\\
       $1/64$& $1/448$&4&4.91655e-05& 2.33089 &2.05054e-06&  1.96532 &   6254.84\\
  \hline
\end{tabular*}
\end{center}
 \end{table}

\begin{table}[!h]
\tabcolsep 0pt \caption{The errors of EPLP for Problem 2 with
Mini-elements, $h\approxeq H^{2}$, with 64 processors.
{\label{E2P1bP1}}} \vspace*{-10pt}
\begin{center}
\def\temptablewidth{0.9\textwidth}
{\rule{\temptablewidth}{1pt}}
\begin{tabular*}{\temptablewidth}{@{\extracolsep{\fill}}ccccccccc }
         \hline  $H$& $h$&$K$&$||\mathbf u-{\mathbf u}^{h}||_{1,\Omega}$ &Order & $||p-{p}^h||_{0,\Omega}$ &  Order   & Wall time  \\
 \hline  $1/8$&$1/64$&2 &0.151405&         &0.0550405&  &  14.67
\\
 $1/16$&$1/256$&2 &0.0361013&   1.03415     & 0.0131228& 1.03421 &  538.43
\\
       $1/32$& $1/1024$&3&    0.00924041   &  0.983011     &  0.00212624   &    1.31285     &52038.4\\
  \hline
\end{tabular*}
\end{center}
 \end{table}

To further test our EPLP, we also consider another smooth problem
(referred as Problem 2) with exact solution of the trigonometric form
 \begin{eqnarray*}
 &&\mathbf u=(\sin(\pi x)^2 \sin(2 \pi y),-\sin(2 \pi x) \sin(\pi y)^2),\\
&&p=\cos(\pi x) \cos(\pi y),
 \end{eqnarray*}
 on the  domain $\Omega=[0,1]\times [0,1]$. The regular unstructured triangulations, and
Taylor-Hood elements  and Mini-elements are used for our EPLP.
The corresponding approximate results are listed in
Table \ref{E2P2P1} and \ref{E2P1bP1}, respectively.
From these two tables, one can observe that,  for EPLP with
$K$ iterations,  the $H^1$-error of velocity and $L^2$-error of
pressure  achieve the optimal orders, namely, $O(h^2)$ by Taylor-Hood
elements, and $O(h)$ by Mini-elements, which support theoretical results in  Theorem  \ref{Theorem2}.

\begin{table}[!h]
\tabcolsep 0pt \caption{The errors of EPLP  for Problem 3 with
Taylor-Hood element pair, $h\approxeq H^{\frac32}$, with 64 processors.
{\label{E3P2P1}}} \vspace*{-10pt}
\begin{center}
\def\temptablewidth{0.9\textwidth}
{\rule{\temptablewidth}{1pt}}
\begin{tabular*}{\temptablewidth}{@{\extracolsep{\fill}}cccccccc}
         \hline  $H$& $h$&$K$&$||\mathbf u-{\mathbf u}^{h}||_{1,\Omega}$ &Order & $||p-{p}^h||_{0,\Omega}$ &  Order    \\
 \hline  $1/4$&$1/8$&1 &0.763&         &0.0408459&
\\
 $1/8$&$1/24$&1 &0.104595   & 1.80879    & 0.00564176&1.80192
\\
       $1/16$& $1/64$&2&    0.0154352  & 1.95084    &  0.000769727  &   2.03085    \\
  \hline
\end{tabular*}
\end{center}
 \end{table}

\subsection{Problem 3}

As the final experiment, we will test our EPLP using a three dimensional problem
(referred as Problem 3) with  $\nu=0.05$ and exact solution
 \begin{eqnarray*}
 &&\mathbf u=\left(\sin(\pi x)^2 \sin(2 \pi y) \sin(2 \pi z),-\sin(2 \pi x) \sin(\pi y)^2\sin(2 \pi z),\sin(2\pi x) \sin(2\pi y)\right),\\
&&p=\cos(2\pi x) \cos(2\pi y)\cos(2\pi z).
 \end{eqnarray*}
The uniform  triangulation and Taylor-Hood elements are used for $T^H$ with mesh size $H$ and the computational domain is
chosen as the unit cube $\Omega=[0,1]\times [0,1]\times [0,1]$.  The computational results are shown in Table
\ref{E3P2P1},  from which one can observe that, while $h$ deceases,   by suitable
iteration of  $K$, both the $H^1$-error of velocity
and $L^2$-error of pressure can reach the optimal orders of
$O(h^2)$, which also support our theoretical analysis.

\section{Conclusions}\label{Conclusions}
In this paper, we have designed an expandable local and parallel  two-grid finite element iterative scheme based on superposition
principle for the Stokes problem.  The optimal convergence orders of the scheme are analyzed and obtained within suitable two-grid
iterations while  numerical tests in  2D and 3D are carried out to show the flexible and high
efficiency of  the scheme.  The extension of the scheme  to the time-dependent problems or nonlinear problems, e.g.,
Navier-Stokes equations, will be our further work.


\begin{thebibliography}{999}
 \bibitem{Xu1996} {\sc J Xu}, {\em Two-Grid Discretization Techniques for Linear and Nonlinear PDEs},
SIAM J. Numer. Anal., 33 (5) (1996), 1759-1777
\bibitem{AMMI} {\sc A. Ait Ou Ammi and M. Marion}, \emph{Nonlinear Galerkin Methods and Mixed Finite Elements: Two-Grid Algorithms for the Navier-Stokes
 Equations}, Numer. Math., 68(1994), 189--213.

\bibitem{CHAN} {\sc T. Chan and T. Mathew}, \emph{Domain Decomposition Algorithms}, Acta Numer., 3(1994), 61--143.

\bibitem{Layton1} {\sc  W. Layton and L. Tobiska},\emph{A Two-Level Method with Backtracking for the Navier-Stokes Equations},
SIAM J. Numer. Anal., 35 (5)(1998), 2035--2054.

\bibitem{HOULI1} {\sc Y. Hou and K. Li}, {\em A Small Eddy Correction Method for Nonlinear Dissipative Evolutionary Equations},
SIAM J. Numer. Anal.,  41 (3), 1101--1130
\bibitem{HOULI2} {\sc Y. Hou and K. Li}, {\em  Post-processing Fourier Galerkin Method for the Navier-Stokes Equations},
 SIAM J. Numer. Anal., 47:3(2009), 1909--1922.
\bibitem{LIHOU} {\sc K. Li and Y. Hou}, {\em An AIM and One-step Newton Method for the Navier-Stokes Equations},
 Comput. Meth. Appl. Mech. Eng., 190(2001), 6141--6155.
\bibitem{LIUHOU} {\sc Q. Liu and Y. Hou}, {\em A Post-processing Mixed Finite Element Method for Navier-Stokes Equations},
 Int. J. Comput. Fluid Dyn., 23:6(2009), 461--475.
  \bibitem{XU} {\sc J. Xu}, \emph{Iterative Methods by Space Decomposition and Subspace Correction}, SIAM Rev., 34(1992), 581--613.

\bibitem{XU1} {\sc J. Xu and A. Zhou}, {\em Local and Parallel Finite Element Algorithms based on Two-Grid Discretizations},
 Math. Comput., 69(2000), 881--909.
\bibitem {Bank} {\sc R. E. Bank and M. Holst}, {\em A New Paradigm for Parallel Adaptive Meshing Algorithms}, SIAM
Rev., 45 (2003), 291--323.


\bibitem{HEXU1} {\sc Y. He, J. Xu and A. Zhou}, {\em Local and Parallel Finite Element Algorithms for the Stokes
 Problem}, Numer. Math. 109:3(2008), 415--434.

\bibitem{NITSCHE} {\sc J. Nitsche and A.H. Schatz}, \emph{Interior Estimates for Ritz-Galerkin Methods}, Math. Comput., 28(1974), 937--955.




\bibitem{BABUSKA}{\sc I. Babuska and J.M. Melenk}, \emph{The Partition of Unity Method}, Int.  J. Numer. Meth. Eng., 40(1997), 727--758.

\bibitem{WANG} {\sc C. Wang, Z. Huang and L. Li}, \emph{Two-Grid Partition of Unity Method for Second Order Elliptic Problems}, Appl. Math. Mech., 29(2008), 527--533.

\bibitem{ZHENG} {\sc H. Zheng, L. Song, Y. Hou and Y. Zhang}, \emph{The Partition of Unity  Parallel Finite Element Algorithm}, Adv. Comput. Math., 41(2015), 937--951.

\bibitem{ZHENG2} {\sc J. Yu, F. Shi and H. Zheng}, {\em Local and parallel finite element algorithms Based on the partition
of unity  for the Stokes problem},  SIAM J. Sci. Comput., 36:5(2014), C547--C567.


\bibitem{ZHENG3} {\sc H. Zheng, J. Yu and F. Shi},{\em Local and Parallel Finite Element Algorithm Based on the Partition of Unity for Incompressible Flows}, J. Sci. Comput., 65 (2)(2015), 512--532.

\bibitem{HOUDU} {\sc Y. Hou and G. Du}, {\em An Expandable Local and Parallel Two-Grid Finite Element Scheme}, Comput. Math. App., 71:12(2016), 2541--2556.

\bibitem{LARSON} {\sc M.G. Larson and A. M{\aa}lqvist}, {\em Adaptive Variational Multi-scale Methods Based on a Posteriori
Error Estimation: Energy Norm Estimates for Elliptic Problems},
Comput. Meth. Appl. Mech. Eng., 196(2007),
 2313--2324.
\bibitem{LARSON1} {\sc M.G. Larson and A. M{\aa}lqvist}, \emph{An Adaptive Variational Multi-scale Method for Convection-Diffusion Problems},
 Comm.  Numer. Meth. Eng., 25(2009), 65--79.


\bibitem{ADAMS} {\sc R.A. Adams}, \emph{Sobolev Spaces}, Academic Press, New York, 1975.
\bibitem{CIARLET} {\sc P.G. Ciarlet and J.L. Lions}, \emph{Handbook of Numerical Analysis, Vol. II, Finite Element Methods (Part I)},
 North-Holland, Amsterdam, 1991.

\bibitem{GIRRAV} {\sc V. Girault and P.A. Raviart}, \emph{Finite Element Methods for Navier-Stokes Equations, Theory and Algorithms}, Springer-Verlag, Berlin Heidelberg New York Tokyo, 1986.


\bibitem{GLOWINSKI} {\sc V. Girault, R. Glowinski, H. L\'opez and J.-P. Vila}, \emph{A Boundary
 Multiplier/Fictitious Domain Method for the Steady Incompressible Navier-Stokes Equations}, Numer. Math., 88(2001),
 75--103.




\bibitem{JLLIONS} {\sc J.L. Lions and E. Magenes}, {\em Non-Homogeneous Boundary Value Problems and Applications}, Springer-Verlag, Berlin Heidelberg New York, 1972.





 \bibitem{free}{F. Hecht}, New development in freefem++. J. Numer. Math., 20(2012), 251--265.







\end{thebibliography}
\end{document}